\newcounter{commentcounter}
\renewcommand*{\backref}[1]{}
\renewcommand*{\backrefalt}[4]
{
    \ifcase #1
        No citation in the text.
    \or
        Cited on Page #2.
    \else
        Cited on Pages #2.
    \fi
}
\newtheorem{thm}{Theorem}[section]
\newtheorem{lemma}[thm]{Lemma}
\newtheorem{corollary}[thm]{Corollary}
\newtheorem{prop}[thm]{Proposition}
\newtheorem*{claim*}{Claim}
\newtheorem{question}[thm]{Question}
\theoremstyle{definition}
\newtheorem{defn}[thm]{Definition}
\newtheorem{remark}[thm]{Remark}
\theoremstyle{plain}
    \newtheoremstyle{TheoremNum}
        {\topsep}{\topsep} %%% space between body and thm
        {\itshape} %%% Thm body font
        {-0.15cm} %%% Indent amount (empty = no indent)
        {\bfseries} %%% Thm head font
        {.} %%% Punctuation after thm head
        { }  %%% Space after thm head
        {\thmname{#1}\thmnote{ \bfseries #3}}%%% Thm head spec
    \theoremstyle{TheoremNum}
    \newtheorem{duplicate}{}
\newcommand*{\claimproofname}{My proof}
\newenvironment{claimproof}[1][\claimproofname]{\begin{proof}[#1]}{\end{proof}}
\DeclareMathOperator{\st}{\mathrm{st}}
\DeclareMathOperator{\lk}{\mathrm{lk}}
\DeclareMathOperator{\Pc}{\mathrm{Pc}}
\newcommand{\caln}{{\mathcal{N}}}
\newcommand{\onto}{\twoheadrightarrow}
\def\Z{\mathbb{Z}}
\newcommand{\ZZ}{\mathbb{Z}}
\tikzstyle{blackNode}=[fill=black, draw=black, shape=circle]
\definecolor{applegreen}{rgb}{0.55, 0.71, 0.0}
\definecolor{dgreen}{rgb}{0.0, 0.5, 0.0}
\title[Higman--Thompson and right-angled Coxeter groups]{Higman--Thompson groups and profinite properties of right-angled Coxeter groups}
\author{Samuel M. Corson}
\author{Sam Hughes} 
\author{Philip Möller}
\author{Olga Varghese}
\date{\today}
\address{Samuel M. Corson\\
E. T. S. I. I.\\
Universidad Polit\'{e}cnica de Madrid\\
Jos\'{e} Guti\'{e}rrez Abascal 2\\
28006 Madrid, Spain
}
\email{sammyc973@gmail.com}
\address{Sam Hughes\\
Mathematisches Institut\\
Rheinische Friedrich-Wilhelms-Universit\"at Bonn\\
Endenicher Allee 60\\ 
53115 Bonn, Germany}
\email{sam.hughes.maths@gmail.com; hughes@math.uni-bonn.de}
\address{Philip M\"oller\\ 
Institute of Mathematics\\ 
Heinrich-Heine-University D\"usseldorf\\ 
Universit\"atsstra{\upshape{\ss}}e 1\\
40225 Düsseldorf, Germany}
\email{philip.moeller.2@hhu.de}
\address{Olga Varghese\\
Department of Mathematics\\
University of M\"unster\\ 
Einsteinstra\ss e 62\\
48149 M\"unster, Germany}
\email{olga.varghese@uni-muenster.de}
\keywords{Coxeter groups, profinite rigidity, Grothendieck rigidity, graph products, generating sets of Higman--Thompson groups}
\subjclass[2020]{20F55, 20E18 (primary); 20E36, 20F65, 20E45 (secondary)}
\begin{document}
	
\pagenumbering{arabic}
	
\begin{abstract}
	We prove that every right-angled Coxeter group (RACG) is profinitely rigid amongst all Coxeter groups.  On the other hand we exhibit RACGs which have infinite profinite genus amongst all finitely generated residually finite groups.  We also establish profinite rigidity results for graph products of finite groups.  Along the way we prove that the Higman--Thompson groups $V_{n}$ are generated by $4$ involutions, generalising a classical result of Higman for Thompson's group $V$.
% 
% \vspace{1cm}
% \hspace{-0.6cm}
% {\bf Key words.} \textit{Coxeter groups, profinite rigidity, Grothendieck rigidity, graph products, generating sets of Higman--Thompson groups}	
% \medskip
% 
% \medskip
% \hspace{-0.5cm}{\bf 2010 Mathematics Subject Classification.} Primary: 20F55, 20E18; Secondary: 20E36, 20F65, 20E45.
% 20F55     Coxeter groups
% 20E18     Limit groups, profinite groups
% 20E26  	Residual properties and generalizations; residually finite groups
% 20F65     geometric group theory
% 20F36     Braid groups, Artin groups
% 20B07  	General theory for infinite permutation groups
% 20E45  	Conjugacy classes for groups
%\thanks{r.}
% 
\end{abstract}

\maketitle

\section{Introduction}
For a  group $G$ we denote by $\mathcal{F}(G)$ the set of isomorphism classes of finite quotients of $G$. Two groups $G$ and $H$ are said to have the same finite quotients if $\mathcal{F}(G) =\mathcal{F}(H)$. A group $G$ is called \emph{profinitely rigid relative to a class of groups $\mathcal{C}$} if $G\in \mathcal{C}$ and for any group $H$ in the class $\mathcal{C}$ whenever $\mathcal{F}(G)=\mathcal{F}(H)$, then $G\cong H$.
By definition, a finitely generated residually finite group $G$ is called \emph{profinitely rigid (in the absolute sense)} if $G$ is profinitely rigid relative to the class consisting of all finitely generated residually finite groups. If $G$ is not profinitely rigid then we say $G$ is \emph{profinitely flexible}.

The \emph{genus} of $G$, denoted by $\mathcal{G}(G)$, is defined as the set of isomorphism classes of finitely generated residually finite groups with the same finite quotients as $G$. A group $G$ is called \emph{almost profinitely rigid} if $\mathcal{G}(G)$ is finite.

The study of profinite rigidity has motivated and been the subject of a plethora of research.  For example, finitely generated nilpotent groups are almost profinitely rigid \cite{Pickel1971} and so are polycyclic groups \cite{GrunewaldPickelSegal1980}.  There are metabelian groups with infinite genus \cite{Pickel1974}.

There has been tremendous progress with regard to $3$-manifold groups; deep work of Bridson--McReynolds--Reid--Spitler shows that there are hyperbolic $3$-manifolds groups which are profinitely rigid in the absolute sense \cite{BridsonMcReynoldsReidSpitler2020} with more examples constructed in \cite{CheethamWest2022}.  Note there are profinitely flexible, albeit with finite genus, torus bundle groups \cite{Stebe1972,Funar2013,Hempel2014}.  

For rigidity within the class of $3$-manifold groups even more is known, we summarise some of the highlights:  the Thurston geometry is detected by the profinite completion \cite{WiltonZalesskii2017}, so are various decompositions \cite{Wilkes2018,Wilkes2018b,WiltonZalesskii2019}, fibring is a profinite invariant \cite{Jaikin2020} (see \cite{HughesKielak2022} for a generalisation), and in a recent breakthrough Yi Liu showed that finite volume hyperbolic $3$-manifolds are almost profinitely rigid amongst $3$-manifold groups \cite{Liu2023}.  An analogous result for generic free-by-cyclic groups was obtained in \cite{HughesKudlinska2023}.

On the other hand many full-size groups are not profinitely rigid.  Platonov--Tavgen' showed that $F_2\times F_2$ is profinitely flexible \cite{PlatonovTavgen1986}.  More examples were given by Bass--Lubotzky \cite{BassLubotzky2000}.  In a different vein Pyber \cite{Pyber2004} showed the genus could be uncountable. Next, Bridson--Grunewald gave examples of profinite flexibility amongst the class of finitely presented groups \cite{BridsonGrunewald2004} and showed that $F_n\times F_n$ for $n\geq 3$ has infinite genus. We generalise this result by showing that $F_2\times F_2$ also has infinite genus (\Cref{freegroupsinfinitegenus}). More recently, Bridson \cite{Bridson2016} showed that the profinite genus amongst finitely presented groups can be infinite.

Despite Coxeter groups being ubiquitous with geometric group theory, the study of their profinite genus has remained elusive.  To the authors knowledge the only work on this topic are the following results:  Bessa--Grunewald--Zalesskii studied the genus of groups within the class of groups that are virtually the direct products of free and surface groups \cite{BessaGrunewaldZalesskii2014}, Kropholler--Wilkes \cite{KrophollerWilkes2016} proved that a right-angled Coxeter group (RACG) is profinitely rigid amongst RACGs, 
Santos Rego--Schwer proved all triangle Coxeter groups are distinguished from each other by their finite quotients \cite{SantosRegoSchwer2022}, 
and the third and fourth author of this article  proved irreducible affine Coxeter groups are profinitely rigid amongst Coxeter groups \cite{MollerVarghese2023}. Finally, a small number of hyperbolic reflection groups are known to be profinitely rigid in the absolute sense \cite{BridsonMcReynoldsReidSpitler2021}. 
To this end we raise and partially answer the following questions.

\begin{question}\label{question}
Are Coxeter groups profinitely rigid in the absolute sense? Are they rigid relative to the class of Coxeter groups?
\end{question}

The second question has particular relevance to the widely studied but unsolved isomorphism problem for Coxeter groups.  See \cite{Muhlherr2006,SantosRegoSchwer2022} 
for surveys on the isomorphism problem. 
Indeed, knowing profinite rigidity amongst the class of Coxeter groups would give an algorithm to determine whether two Coxeter groups are not isomorphic (simply examine their finite quotients).

Our first result shows that graph products of finite groups can be distinguished from each other by their finite quotients. This vastly generalises a result of Kropholler--Wilkes \cite{KrophollerWilkes2016} that RACGs are distinguished from each other by their finite quotients and also provides a completely new proof of their result.  Indeed, their proof relies on the cup product structure of a RACG whereas we proceed using subgroup separability properties and profinite Bass--Serre theory.

\medskip
\begin{duplicate}[\Cref{GraphProductsProfiniteRigidity}]
Let $G_\Gamma$ be a graph product of finite groups. Then, $G_\Gamma$ is profinitely rigid relative to the class of graph products of finite groups.
\end{duplicate}
\medskip

Returning to Coxeter groups, we show that RACGs are profinitely rigid amongst the class of Coxeter groups.  This shows that the relative version of \Cref{question} has a positive answer for RACGs.

\medskip
\begin{duplicate}[\Cref{CoxeterProfiniteRigid}]
Let $W$ be a right-angled Coxeter group.  Then, $W$ is profinitely rigid amongst the class of Coxeter groups.
\end{duplicate}
\medskip

Our next major result shows a strong profinite flexibility statement for direct products of free products.  

\medskip
\begin{duplicate}[\Cref{infinitegenus}]
Let $\ell\geq 4$.  Let $G_1, \ldots, G_{\ell}$ be finitely generated, residually finite groups such that for all $1 \leq j \leq \ell$ the centraliser of a nonabelian subgroup of $G_j$ is trivial.  Assume also that at least four of the $G_j$ have a subgroup of index $2$.  Let $d \geq 2$.  Then, the genus of $$\prod_{i=1}^d\left(\bigast_{j=1}^{\ell}G_j\right)$$ is infinite.
\end{duplicate}
\medskip

As an immediate corollary we obtain that \Cref{question} has a negative answer in the general case.

\begin{corollary}
The genus of the Coxeter group  $(\bigast_{i=1}^4 C_2)\times(\bigast_{i=1}^4 C_2)$ is infinite.
\end{corollary}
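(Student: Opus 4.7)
My plan is to deduce this corollary as an immediate specialisation of \Cref{infinitegenus}. Concretely, I would set $\ell = 4$, $d = 2$, and $G_1 = G_2 = G_3 = G_4 = C_2$; with these choices the product $\prod_{i=1}^{d}\bigl(\bigast_{j=1}^{\ell} G_j\bigr)$ becomes exactly $(\bigast_{i=1}^{4} C_2) \times (\bigast_{i=1}^{4} C_2)$, so the hoped-for conclusion is precisely the content of the theorem in this case.

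The next step is to verify the four hypotheses of \Cref{infinitegenus}. First, each $C_2$ is finite, hence finitely generated and residually finite. Second, since $C_2$ is abelian it contains no nonabelian subgroup, so the condition that the centralizer of a nonabelian subgroup of $G_j$ be trivial is vacuously satisfied. Third, the trivial subgroup has index $2$ in $C_2$, so all four of the $G_j$ (in particular at least four) have a subgroup of index $2$. Fourth, $d = 2 \geq 2$ and $\ell = 4 \geq 4$. The theorem then yields that the genus of $(\bigast_{i=1}^{4} C_2) \times (\bigast_{i=1}^{4} C_2)$ is infinite.

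The only remaining sanity check is that this group really is a Coxeter group, as asserted in the statement. This is immediate: $\bigast_{i=1}^{4} C_2$ is the universal right-angled Coxeter group on four generators, namely the RACG on the edgeless graph with four vertices, and a direct product of (right-angled) Coxeter groups is again a (right-angled) Coxeter group. In fact the group in question is the RACG defined by the complete bipartite graph $K_{4,4}$.

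Since all of the content is packaged inside \Cref{infinitegenus}, there is essentially no obstacle here; the corollary is a one-line unpacking once the hypotheses are checked. The real difficulty of course lies entirely in the theorem itself.
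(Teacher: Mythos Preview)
Your proposal is correct and matches the paper's approach exactly: the paper presents this corollary as an immediate consequence of \Cref{infinitegenus} with no further argument, and your verification of the hypotheses (finite hence residually finite, the nonabelian-centralizer condition vacuous for $C_2$, and the trivial subgroup having index $2$) is precisely what is needed.
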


We can also show certain graph products, and in particular \emph{irreducible} RACGs, that is RACGs which do not split as a non-trivial direct product, are not profinitely rigid (see \Cref{prof.flex.graph.prod} and \Cref{rem.irr}).

Our proof utilises the strategy due to Platonov--Tavgen' to construct so called \emph{Grothendieck pairs} out of fibre products of quotients to perfect groups with vanishing second integral homology and no finite quotients.  A Grothendieck pair is a subgroup $P\leqslant G$ such that every finite quotient of $P$ is induced by one of $G$ and such that $P$ surjects onto every finite quotient of $G$.  

Due to the fact Coxeter groups are generated by involutions we have to find infinite perfect groups which have vanishing second integral homology, no finite quotients, and are generated by involutions.  To this end we utilise the Higman--Thompson groups $V_{n}$ and work of Kapoudjian \cite{Kapoudjian2002} showing that $H_2(V_{n}; \Z)=0$.  Note that the computation of all of the homology groups of $V_n$ was completed in a recent breakthrough of Szymik--Wahl \cite{SzymikWahl2019}.  Our main new contribution to the theory of $V_{n}$ is the following generalisation of a classical result of Higman about Thompson's group $V$ \cite{Higman1974}.

\medskip
\begin{duplicate}[\Cref{fourinvolutionsgenerate}]  
The group $V_n$, with $n \geq 2$, is generated by four involutions.  
\end{duplicate}
\medskip
It is not hard to see that two involutions do not suffice to generate $V_n$. Hence raise the following question
\begin{question}
Can $V_n$, with $n\geq 2$, be generated by three involutions? Is it $(2,3)$-generated?
\end{question}

We also use the \emph{3/2-generation} of the Higman--Thompson groups $V_n$ due to Donovan--Harper \cite{DonovenHarper2020} to construct more examples of Grothendieck pairs (see \Cref{nicefreeprod} for the result and its preceding paragraph for the definition of 3/2-generation).

We end with a refinement of the first part of \Cref{question} which our methods leave open.

\begin{question}
    Let $W$ be a Coxeter group.  Is $W$ profinitely rigid relative to finitely presented groups?
\end{question}

\subsection*{Structure of the paper}
In \Cref{sec:Cox} we give the necessary background on Coxeter groups and graph products we will need to prove our results.  We also prove that even Coxeter groups are finite subgroup separable (\Cref{FiniteSubgroupsSeparable}); see \Cref{sec:Cox} for the relevant definitions.

In \Cref{sec:Rigidity:background} we give some properties of the profinite completion and recount some needed profinite Bass-Serre theory.  In \Cref{sec:rigidity:Graph} we prove \Cref{GraphProductsProfiniteRigidity}.  In \Cref{sec:Rigidity:RACGs} we prove \Cref{CoxeterProfiniteRigid}.  In \Cref{sec:Rigidity:free} we show that a Coxeter group splitting as a non-trivial free product is detected by the profinite completion.  We use this to prove (\Cref{coxeter.free.rigid}) that a free product of Coxeter groups each profinitely rigid amongst Coxeter groups is again profinitely rigid amongst Coxeter groups.

In \Cref{sec:HT} we describe the necessary background on Higman--Thompson groups $V_n$ and then prove \Cref{fourinvolutionsgenerate}.

In \Cref{sec:Flexibility} we recount the Platonov--Tavgen' construction and combine it with \Cref{fourinvolutionsgenerate} to prove \Cref{infinitegenus}.  We provide several other examples of Grothendieck pairs using different properties of the groups $V_n$ (see \Cref{UniversalCoxeter} and \Cref{nicefreeprod}).  We then go on to show that many graph products of groups are not profinitely rigid in the absolute sense.  See \Cref{prof.flex.graph.prod} and its corollaries.

\subsection*{Acknowledgements}
We want to thank Yuri Santos Rego for useful comments on the previous version of this paper.
The work of the first author is supported by the Basque Government Grant IT1483-22 and Spanish Government Grants PID2019-107444GA-I00 and PID2020-117281GB-I00.  The second author received funding from the European Research Council (ERC) under the European Union's Horizon 2020 research and innovation programme (Grant agreement No. 850930).  The third author is funded by a stipend of the Studienstiftung des deutschen Volkes and  by the Deutsche Forschungsgemeinschaft (DFG, German Research Foundation) under Germany's Excellence Strategy EXC 2044--390685587, Mathematics M\"unster: Dynamics-Geometry-Structure. The fourth author is supported by DFG grant VA 1397/2-2. This work is part of the PhD project of the third author.  Finally, we would like to thank the anonymous referees for a number of very helpful comments and suggestions.  We are especially grateful to the referee who suggested a simplification of the proof of \Cref{fourinvolutionsgenerate} which substantially shortened the argument.

\section{Coxeter groups and graph products of groups}\label{sec:Cox}
In this section we briefly recall the basics on Coxeter groups and graph products of groups that we need for the later sections. More information about Coxeter groups can be found in \cite{Davis2008} and about graph products of groups in \cite{Green1990}.

Let $\Gamma=(V,E)$ denote a finite simplicial graph and $m\colon E\to \mathbb{N}_{\geq 2}$ an edge-labeling. We define the \textit{Coxeter group} $W_\Gamma$ as
$$W_\Gamma:=\left\langle V\mid v^2\text{ for }v\in V, (vw)^{m(\{v,w\})}\ \text{if } \{v,w\}\in E \right\rangle.$$
We say $W_\Gamma$ is  \emph{right-angled} if $m(e)=2$ for every edge $e\in E$. If all edge labels are even, we call $W_\Gamma$ \emph{even}. Given a subset $X\subseteq V$, we define the \textit{special parabolic subgroup} $W_X$ as the subgroup $\langle X\rangle \subseteq W_\Gamma$. By \cite[Page 20]{Bourbaki68}, \cite[Theorem 4.1.6]{Davis2008}, this is well defined since $W_X$ is canonically isomorphic to the Coxeter group defined via the full subgraph spanned by $X$. By definition, any conjugate of a special parabolic subgroup is called \emph{parabolic}. 
Given a family of parabolic subgroups, their intersection is again a parabolic subgroup by \cite{Solomon76} and \cite{Qi2007}. 
Let $W_\Gamma$ be a Coxeter group and $A\subseteq W_\Gamma$ be a subgroup. By \cite[Theorem 1.2]{Qi2007} there exists a unique minimal parabolic subgroup $gW_\Delta g^{-1}$ such that $A\subseteq gW_\Delta g^{-1}$. This parabolic subgroup is called the \emph{parabolic closure} of $A$ and is denoted by $\Pc(A)$. We note that if two subgroups $A$, $B$ are conjugate, then $\Pc(A)$ is conjugate to $\Pc(B)$.

The definition of a graph product of groups is similar. Let $\Gamma=(V,E)$ be a finite simplicial graph and let $f\colon V\to\left\{\text{non-trivial fin. gen. groups}\right\}$ be a vertex-labeling. The \emph{graph product of groups} $G_\Gamma$ is defined as a quotient
$$G_\Gamma:=\left(\bigast_{v\in V} f(v)\right)\bigg/\langle\langle [a,b]\mid a\in f(v), b\in f(w)\text{ if }\left\{v, w\right\}\in E\rangle\rangle$$

The definitions of (special) parabolic subgroups and the parabolic closure of a subgroup are the same as for Coxeter groups. For more details see \cite{Green1990} and \cite{AntolinMinasyan2015}. 

Note that if every vertex group is isomorphic to $C_2$, the graph product of groups $G_\Gamma$ is a right-angled Coxeter group. If every vertex group is isomorphic to $\Z$, we call $G_\Gamma$ a \textit{right-angled Artin group}.

It is known that Coxeter groups 
are linear, see \cite[Page 91]{Bourbaki68}, \cite[Corollary 6.12.11]{Davis2008}.
Hence, Coxeter groups 
are residually finite by \cite{Malcev1940}. It was shown by Green in \cite[Corollary 5.4]{Green1990} that graph products of residually finite groups are residually finite. In particular, graph products of finite groups are residually finite.

Complete subgraphs are important for the study of graph products and  Coxeter groups. In this article we adopt the well known graph-theoretic terminology and call them \textit{cliques}. Note that we also call a parabolic subgroup $gG_\Delta g^{-1}$ a clique if $\Delta$ is a clique.

Finally, we call a graph product of groups $G_\Gamma$ (Coxeter group $W_\Gamma$) \textit{reducible}, if there exists a partition of the vertex set $V=V_1\cup V_2$ such that $\Gamma$ is the join of the induced subgraphs, that is there is an edge (with label $2$) between every pair of vertices $\{v_1,v_2\}$ with $v_1\in V_1$ and $v_2\in V_2$. Otherwise we call the group \textit{irreducible}.

\medskip

Let $G$ be a group and $H, L\leqslant G$ be two non-conjugate subgroups. By definition $H$ is \emph{conjugacy separable} from $L$ if there exists a homomorphism $\varphi\colon G\to F$, with $F$ finite, such that $\varphi(H)$ is not conjugate to $\varphi(L)$.  A group $G$ is said to be \emph{subgroup conjugacy separable} if any two finitely generated non-conjugate subgroups of $G$ are conjugacy separable. For example all limit groups are known to be subgroup conjugacy separable by \cite{ChagasZalesskii2016}.

We call $G$ \emph{finite subgroup conjugacy separable} if any two non-conjugate finite subgroups of $G$ are conjugacy separable.
We call $G$ \emph{conjugacy separable} if any two non-conjugate elements of $G$ are remain non-conjugate in some finite quotient.

Being (subgroup) conjugacy separable should not be confused with being \emph{locally extended residually finite} (LERF) also known as \emph{subgroup separable}.  This latter property states that every finitely generated subgroup $H\leqslant G$ is separable from every element $g\in (G - H)$ in a finite quotient.  Namely, for each such $g$, there exists $\alpha\colon G\to F$, with $F$ finite, such that $\alpha(g)\notin\alpha(H)$.

It was proven by Caprace and Minasyan in \cite[Theorem 1.2]{CapraceMinasyan2013} that an even Coxeter group $W_\Gamma$ is conjugacy separable if $\Gamma$ has no triangles isomorphic to the graph in Figure 1.
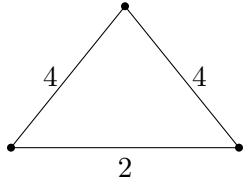
\begin{figure}[h]
\begin{center}
\begin{tikzpicture}[scale=0.75]
    \coordinate (A) at (0,0);
    \coordinate (B) at (-2,-2.5);
    \coordinate (C) at (2,-2.5);

    \draw (A) -- (B) node [midway, left]{$4$};
    \draw (A) -- (C) node [midway, right]{$4$};
    \draw (B) -- (C) node [midway, below]{$2$};

    \fill (A) circle (2pt);
    \fill (B) circle (2pt);
    \fill (C) circle (2pt);
\end{tikzpicture}
\caption{Coxeter graph of type $\widetilde{B}_2$.}
\end{center}
\end{figure}

Further, conjugacy separability of graph products of finite groups was proven by Ferov in \cite[Theorem 1.1]{Ferov2016}.
\medskip

We ask the following question.
\begin{question}
Is every Coxeter group finite subgroup conjugacy separable?
\end{question}

\begin{lemma}
\label{FiniteSubgroupsSeparable}
Let $W_\Gamma$ be a Coxeter group. If $\Gamma$ is even, then $W_\Gamma$ is finite subgroup conjugacy separable.
\end{lemma}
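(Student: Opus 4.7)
The plan is to reduce the problem to finite parabolic subgroups and then exploit the ``color'' homomorphism available for even Coxeter groups. Let $H, L \leq W_\Gamma$ be non-conjugate finite subgroups. By the classical fact that every finite subgroup of a Coxeter group is contained in a finite parabolic subgroup, the parabolic closures $Pc(H)$ and $Pc(L)$ are themselves finite parabolic subgroups. Since the assignment $H\mapsto Pc(H)$ is conjugation-equivariant, it is enough to distinguish $H$ and $L$ in a finite quotient after case-splitting on whether $Pc(H)\sim_{W_\Gamma}Pc(L)$.

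The main tool is the \textit{color} homomorphism $\phi\colon W_\Gamma\to(C_2)^V$ sending each generator $v$ to the corresponding basis element. This is well-defined precisely because $\Gamma$ is even: each relation $(vw)^{m(\{v,w\})}=1$ becomes $m(\{v,w\})(v+w)=0$ in $(C_2)^V$, which is satisfied when $m(\{v,w\})$ is even. The target is a finite abelian group, so $\phi$ is itself a finite quotient and is invariant under conjugation; moreover, for any $g\in W_\Gamma$ and any spherical $\Delta\subseteq V$ one has $\phi(gW_\Delta g^{-1})=\phi(W_\Delta)=(C_2)^\Delta$, which implies that conjugate finite special parabolic subgroups share the same underlying vertex set $\Delta$.

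In the first case, where $Pc(H)$ and $Pc(L)$ are not conjugate, we may assume $Pc(H)=W_{\Delta_1}$ and $Pc(L)=gW_{\Delta_2}g^{-1}$ for cliques $\Delta_1\neq\Delta_2$. I would combine $\phi$ with a finite quotient $\pi_0\colon W_\Gamma\to F_0$ that is injective on the finite set $Pc(H)\cup Pc(L)$ (which exists by residual finiteness), obtaining $\Phi=(\phi,\pi_0)\colon W_\Gamma\to (C_2)^V\times F_0$. The $\phi$-component tracks the underlying vertex set, while $\pi_0$ preserves the internal isomorphism type of $Pc(H),Pc(L)$; these together suffice to separate $H$ and $L$. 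In the second case, $Pc(H)\sim Pc(L)$, and after replacing $L$ by a conjugate I may assume $Pc(H)=Pc(L)=W_\Delta$. Non-conjugacy of $H,L$ in $W_\Gamma$ then forces them to lie in distinct orbits of the action of $N_{W_\Gamma}(W_\Delta)$ on the subgroups of $W_\Delta$, since any element $g\in W_\Gamma$ with $gHg^{-1}=L$ must satisfy $gW_\Delta g^{-1}=W_\Delta$. I would again use residual finiteness to embed $W_\Delta$ injectively in a finite quotient, and combine this with Howlett's structure theorem for $N_{W_\Gamma}(W_\Delta)=W_\Delta\rtimes N_\Delta$ to promote this to a finite quotient that respects the $N_{W_\Gamma}(W_\Delta)$-orbit structure.

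The main obstacle is the second case: ensuring that no element of $W_\Gamma\setminus N_{W_\Gamma}(W_\Delta)$, upon passage to the finite quotient, accidentally merges the two orbits. This is precisely a separability requirement on $N_{W_\Gamma}(W_\Delta)$ inside $W_\Gamma$, and handling it cleanly likely requires invoking conjugacy separability for individual elements of $W_\Gamma$ (as established by Caprace--Minasyan for even Coxeter groups with no triangles of type $\widetilde B_2$), or a direct argument using the finite parabolic structure and the color homomorphism.
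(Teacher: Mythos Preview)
Your approach has genuine gaps in both cases, and the missing idea is the same in each: even Coxeter groups retract not just onto their abelianisation (your color map $\phi$) but onto \emph{every} special parabolic subgroup $W_X$, via $p_X(v)=v$ for $v\in X$ and $p_X(v)=1$ otherwise. The paper's proof exploits this directly: after conjugating so that $Pc(G)=W_I$ and $Pc(H)=W_J$ with $|J|\le|I|$, the retraction $p_I\colon W_\Gamma\to W_I$ is itself a homomorphism onto a finite group, it fixes $G$ elementwise, and it sends $H$ into $W_{I\cap J}$. If $J\subseteq I$ then $p_I(H)=H$ and non-conjugacy in $W_\Gamma$ forces non-conjugacy in the subgroup $W_I$; if $J\not\subseteq I$ then $Pc(p_I(H))\subseteq W_{I\cap J}$ has strictly smaller order than $Pc(p_I(G))=W_I$, so they cannot be conjugate. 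Both of your cases are thus handled by a single finite quotient.

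Concretely, your Case~1 argument breaks because $\phi$ only records $\phi(H)$, not $\phi(Pc(H))$, and these can differ drastically. Take $\Gamma$ with vertices $a,b,c,d$, edges $\{a,b\}$ and $\{c,d\}$ each labelled $4$, and set $H=\langle (ab)^2\rangle$, $L=\langle (cd)^2\rangle$. Then $Pc(H)=W_{\{a,b\}}$ and $Pc(L)=W_{\{c,d\}}$ are non-conjugate, but $\phi(H)=\phi(L)=\{0\}$. Your auxiliary $\pi_0$, being merely injective on $Pc(H)\cup Pc(L)$, gives no control over conjugacy of $\pi_0(H)$ and $\pi_0(L)$ in $F_0$; asking that it does is precisely the problem you are trying to solve. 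Your Case~2 is left explicitly incomplete, and the suggested fallback to Caprace--Minasyan does not cover all even Coxeter groups (their hypothesis excludes $\widetilde B_2$ triangles). The retraction $p_\Delta\colon W_\Gamma\to W_\Delta$ dispatches this case immediately: it fixes both $H$ and $L$, and they are non-conjugate in $W_\Delta$ since $W_\Delta\le W_\Gamma$.
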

%\begin{proof}
%It is well known that for an even Coxeter group $W_\Gamma$ and a special parabolic subgroup $W_X$ the homomorphism
%$p_X\colon W_\Gamma\twoheadrightarrow W_X$ induced by $p_X(v)=v$ for all $v\in X$ and $p_X(w)=1$
%for $w\in(V-X)$ is a well-defined retraction, see
%\cite[Proposition 2.1]{Gal2005} for more details on this.

%Let $G$ and $H$ be two non-conjugate finite subgroups in $W_\Gamma$. By \cite[Chap. 5 §4, Exercise 2]{Bourbaki68} we know that there exist finite special parabolic subgroups $W_I$, $W_J$ and $g,h\in W_\Gamma$ such that $Pc(G)=gW_Ig^{-1}$ and $Pc(H)=hW_Jh^{-1}$.

%Without loss of generality we replace $G$ and $H$ by $g^{-1}Gg$ and $h^{-1}Hh$ respectively.
%Furthermore, we can assume that $|J|\leq |I|$.

%Now, we consider the canonical retraction $p_I: W_\Gamma\twoheadrightarrow W_I$. We get $p_I(G)=G$ and $p_I(H)\subseteq W_I\cap W_J=W_{I\cap J}$. 
%If $J\subseteq I$, then $p_I(H)=H$ and $p_I(G)$ is not conjugate to $p_I(H)$.
%If $J\nsubseteq I$, then the cardinality of $I\cap J$ is smaller than the cardinality of $J$. Assume for a contradiction that $p_I(G)$ is conjugate to $p_I(H)$. Then $Pc(p_I(G))=W_I$ is conjugate to $Pc(p_I(H))\subseteq W_{I\cap J}$. The order of $W_{I\cap J}$ is smaller than the order of $W_I$, thus these groups can not be conjugate.
%\end{proof}

%\olga{an alternative proof}
\begin{proof}
It is well known that for an even Coxeter group $W_\Gamma$ and a special parabolic subgroup $W_X$ the homomorphism
$p_X\colon W_\Gamma\twoheadrightarrow W_X$ induced by $p_X(v)=v$ for all $v\in X$ and $p_X(w)=1$
for $w\in(V-X)$ is a well-defined retraction, see
\cite[Proposition~2.1]{Gal2005} for more details on this.

Let $G$ and $H$ be two non-conjugate finite subgroups in $W_\Gamma$. By \cite[Chapter~5 \S4, Exercise~2]{Bourbaki68}, there exist finite special parabolic subgroups $W_I$, $W_J$ and $g,h\in W_\Gamma$ such that $\Pc(G)=gW_Ig^{-1}$ and $\Pc(H)=hW_Jh^{-1}$.
Without loss of generality we replace $G$ and $H$ by $g^{-1}Gg$ and $h^{-1}Hh$ respectively to assume that 
$\Pc(G)=W_I$ and $\Pc(H)=W_J$.  Furthermore, we can assume that $|J|\leq |I|$.

We now show that the retraction $p_I\colon W_\Gamma\twoheadrightarrow W_I$ distinguishes the conjugacy classes of the images of $G$ and $H$.  We have two cases depending on if $J\subseteq I$:

\noindent \textit{Case 1:} If $J\subseteq I$, then $W_J\subseteq W_I$ and $p_I(H)=H\subseteq W_I$ and $p_I(G)=G\subseteq W_I$. By assumption, $G$ and $H$ are not conjugate in $W_\Gamma$, hence $G$ and $H$ are also not conjugate in $W_I$. 

\noindent \textit{Case 2:}  If $J\nsubseteq I$, then $|I\cap J|<|I|$.  By definition, $p_I$ \emph{commutes} with $p_J$ if for all $w\in W_\Gamma$ we have
$$p_I(p_J(w))=p_J(p_I(w)).$$
Note, that all retractions onto special parabolic subgroups in even Coxeter groups commute, see \cite[Page 8]{CapraceMinasyan2013}. Thus, by \cite[Remark 2.4]{CapraceMinasyan2013} the map $p_I\circ p_J=p_J\circ p_I$ is a retraction of $W_\Gamma$ onto $W_{I\cap J}$.

Assume for a contradiction that $p_I(G)\subseteq W_I$ is conjugate to $p_I(H)\subseteq W_I$. Then $\Pc(p_I(G))$ is conjugate to $\Pc(p_I(H))$ in $W_I$.
Now, we consider the structure of these parabolic closures. We have 
$$\Pc(p_I(G))=\Pc(G)=W_I.$$
Since $\Pc(H)=W_J$ we have $p_J(h)=h$ for all $h\in H$. The retractions $p_I$ and $p_J$ commute, thus $p_I\circ p_J$ is the retraction of $W_\Gamma$ onto $W_{I\cap J}$. Therefore we obtain
$$\Pc(p_I(H))=\Pc(p_I\circ p_J(H))=\Pc(p_{I\cap J}(H))\subseteq W_{I\cap J}.$$
Since the cardinality of $I\cap J$ is smaller than the cardinality of $I$ the group $\Pc(p_I(G))$ can not be conjugate to $\Pc(p_I(H))$ (see \cite[Lemma 3.2]{Qi2007}). The contradiction completes the proof. \qedhere
\end{proof}

It follows from \cite[Lemma 3.18]{GenevoisMartin2019} that in a graph product of finite groups the parabolic closure of a finite subgroup is finite. Thus the same proof strategy shows the following lemma.
\begin{lemma}
\label{FiniteSubgroupsSeparableGraphProducts}
Let $G_\Gamma$ be a graph product of finite groups. Then $G_\Gamma$ is finite subgroup conjugacy separable.
\end{lemma}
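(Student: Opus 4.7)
The plan is to mirror the proof of \Cref{FiniteSubgroupsSeparable} line by line, replacing Coxeter-group retractions onto special parabolic subgroups by the analogous retractions in graph products, and replacing finite (spherical) special parabolic subgroups of even Coxeter groups by the clique parabolic subgroups of a graph product of finite groups.

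First I would record the two structural inputs that drive the argument. The first is that for every $X \subseteq V$ the map sending each $f(v) \subseteq G_\Gamma$ identically to itself for $v \in X$, and to $1$ for $v \notin X$, extends to a retraction $p_X\colon G_\Gamma \twoheadrightarrow G_X$; this is well defined because the only defining relators of $G_\Gamma$ are commutators between vertex groups joined by an edge, and these are preserved by $p_X$. The second is that, since every vertex group is finite and nontrivial, a special parabolic subgroup $G_X$ is finite if and only if $X$ is a clique, and every finite subgroup of $G_\Gamma$ has parabolic closure of the form $gG_Cg^{-1}$ for some clique $C$. I would cite \cite{AntolinMinasyan2015} for this; alternatively it drops out of the action of $G_\Gamma$ on its associated cube complex together with the fact that stabilisers of cubes are conjugates of clique parabolic subgroups.

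With these tools in hand the proof copies the Coxeter-group argument almost verbatim. Let $G, H \leq G_\Gamma$ be non-conjugate finite subgroups and write $Pc(G) = gG_Ig^{-1}$, $Pc(H) = hG_Jh^{-1}$ with $I, J$ cliques. After replacing $G$ and $H$ by $g^{-1}Gg$ and $h^{-1}Hh$ respectively, one has $G \leq G_I$ and $H \leq G_J$, and, swapping the roles of $G$ and $H$ if necessary, $|J| \leq |I|$. Applying $p_I$ gives $p_I(G) = G$ and $p_I(H) \leq G_I \cap G_J = G_{I \cap J}$. If $J \subseteq I$, then $p_I(H) = H$, and non-conjugacy of $G$ and $H$ in $G_\Gamma$ forces non-conjugacy inside the finite subgroup $G_I$, so $p_I\colon G_\Gamma \twoheadrightarrow G_I$ is a homomorphism to a finite group that separates the conjugacy classes of $G$ and $H$. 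If $J \not\subseteq I$, then $|I \cap J| < |J| \leq |I|$, and hence $|G_{I \cap J}| < |G_I|$; since conjugate subgroups have equal cardinalities, $Pc(p_I(G)) = G_I$ cannot be conjugate to $Pc(p_I(H)) \subseteq G_{I \cap J}$, and again $p_I$ is the required finite quotient.

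The main obstacle, and essentially the only place where anything beyond bookkeeping is needed, is the second ingredient: finite subgroups have clique-type parabolic closures. Verifying that the statement about parabolic closures in graph products of finite groups matches the form used in the Coxeter argument of \Cref{FiniteSubgroupsSeparable} is the step I would check carefully before claiming the proof reduces to a verbatim copy; everything else is just cardinality bookkeeping on clique sizes and routine use of the retractions $p_X$.
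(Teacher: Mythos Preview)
Your proposal is correct and follows precisely the approach the paper intends: the paper simply states that ``the same proof strategy shows the following lemma'' without writing out the details, and your write-up supplies exactly those details, replacing the spherical special parabolics and the even-Coxeter retractions by clique parabolics and the standard graph-product retractions. The one point you flag as needing care---that every finite subgroup of $G_\Gamma$ has parabolic closure conjugate to some $G_C$ with $C$ a clique---is indeed the only nontrivial input, and the references you cite (together with the background in \S\ref{sec:Cox}) cover it.
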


The following lemma regarding the splitting of graph products of groups as amalgamated products will be useful later on.
\begin{lemma} \emph{\cite[Lemma 3.20]{Green1990}}
\label{GraphProductAmalgam}
    Let $G_\Gamma$ be graph product of finite groups. If there exist two vertices $v,w\in V$ such that $\left\{v,w\right\}\notin E$, then
    $G_\Gamma\cong G_{\st(v)}*_{G_{\lk(v)}}G_{V-\left\{v\right\}}$ where $\lk(v)=\left\{w\in V\mid\left\{v,w\right\}\in E\right\}$ and $\st(v)=\lk(v)\cup\left\{v\right\}$.
\end{lemma}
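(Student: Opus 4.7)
The plan is to verify the splitting by checking the universal property characterising the graph product $G_\Gamma$. First I would recall the standard fact that for any induced subgraph $\Delta\subseteq\Gamma$ the special subgroup $G_\Delta$ generated by the vertex groups indexed by $\Delta$ is canonically isomorphic to the graph product defined by $\Delta$ with the inherited vertex labels. Applied to the subgraphs spanned by $st(v)$, $lk(v)$ and $V-\{v\}$, this identifies the three graph products appearing in the statement with the corresponding special subgroups of $G_\Gamma$. Moreover, since $st(v)$ is the join of the isolated vertex $\{v\}$ with $lk(v)$, we have $G_{st(v)}\cong f(v)\times G_{lk(v)}$, and both $G_{st(v)}$ and $G_{V-\{v\}}$ contain $G_{lk(v)}$ as a retract.

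Next I would build the isomorphism. Let $H:=G_{st(v)}\ast_{G_{lk(v)}}G_{V-\{v\}}$. The inclusions of the two factors into $G_\Gamma$ agree on $G_{lk(v)}$, so the universal property of amalgamated free products provides a canonical homomorphism $\varphi\colon H\to G_\Gamma$. The map $\varphi$ is surjective because its image contains every vertex group $f(u)$: if $u=v$ then $f(v)\subseteq G_{st(v)}$, otherwise $f(u)\subseteq G_{V-\{v\}}$.

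To produce an inverse I would verify that $H$ satisfies the universal property that defines $G_\Gamma$. Given any group $K$ together with homomorphisms $\phi_u\colon f(u)\to K$ whose images commute whenever $\{u,u'\}\in E$, the restrictions of $\{\phi_u\}$ to the vertices of $st(v)$ and to the vertices of $V-\{v\}$ induce, via the universal property of the smaller graph products, homomorphisms $G_{st(v)}\to K$ and $G_{V-\{v\}}\to K$ that agree on $G_{lk(v)}$. These assemble into a unique homomorphism $H\to K$ extending all the $\phi_u$. Hence $H$ enjoys the defining universal property of $G_\Gamma$, so $\varphi$ is an isomorphism.

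The only subtle point, and the thing I expect to require the most care, is bookkeeping of the defining relations: relations internal to a single vertex group are automatic; a commuting relation coming from an edge $\{a,b\}$ with $a,b\neq v$ is realised inside $G_{V-\{v\}}$, while any commuting relation involving $v$ is realised inside the direct factor decomposition $f(v)\times G_{lk(v)}$ of $G_{st(v)}$, because any neighbour of $v$ lies in $lk(v)$. The assumption that some $w$ is non-adjacent to $v$ plays no role in the isomorphism itself; it is used precisely to guarantee that $V-\{v\}\neq lk(v)$, so that the amalgam is genuinely non-trivial and yields a useful Bass--Serre decomposition.
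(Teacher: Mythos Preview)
Your argument is correct and is essentially the standard proof of this decomposition via universal properties. Note that the paper does not actually supply a proof of this lemma at all; it simply records the statement with a citation to Green's thesis, so there is no alternative approach in the paper to compare against.
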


Denote by $\mathcal{CF}(G)$ the set of conjugacy classes of all finite subgroups in $G$. We define a partial order on $\mathcal{CF}(G)$ as follows: $[A]\leq [B]$ if there exists a $g\in G$ such that $A\subseteq gBg^{-1}$ (see Theorem 4.1 in \cite{Radcliffe2003} for the proof that this is indeed a partial order).

Recall the following definitions. Given a poset $(P,\leq)$ and a subset $A\subseteq P$, a \textit{greatest lower bound of $A$}, denoted by $\bigwedge A$, is an element $x\in P$ such that $x\leq a$ for all $a\in A$ and if $y\leq a$ for all $a\in A$, then $y\leq x$. Further, a \emph{least upper bound} of $A$, denoted by $\bigvee A$, is an element $x\in P$ such that $x\geq a$ for all $a\in A$ and if $y\geq a$ for all $a\in A$, then $y\geq x$. Note that if a greatest lower bound resp. a least upper bound exists, it is unique.

The next result is essentially due to Radcliffe \cite[Theorem~5.4]{Radcliffe2003} although it is not stated in loc. cit. in the form we need.  For completeness we reproduce the argument here.

\begin{thm}[Radcliffe]
\label{GraphProducts}
Let $G_\Gamma$ and $G_\Omega$ denote two graph products of groups with finite directly indecomposable vertex groups. The following are equivalent:
\begin{enumerate}
    \item $G_\Gamma\cong G_\Omega$
    \item $\Gamma\cong \Omega$ (thought of as graphs with vertices labelled by finite directly indecomposable groups)
    \item There exists an order isomorphism $\psi\colon \mathcal{CF}(G_\Gamma)\to \mathcal{CF}(G_\Omega)$ such that for any $[A]$ in $\mathcal{CF}(G_\Gamma)$ and $B\in \psi([A])$ we have $A\cong B$.
\end{enumerate}
\end{thm}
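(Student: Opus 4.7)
The plan is to establish the cycle $(2) \Rightarrow (1) \Rightarrow (3) \Rightarrow (2)$. The implication $(2) \Rightarrow (1)$ is immediate from the presentation definition of the graph product. For $(1) \Rightarrow (3)$, given an isomorphism $\varphi\colon G_\Gamma \to G_\Omega$, I would define $\psi([A]) := [\varphi(A)]$. This map is well-defined, bijective, and preserves both the partial order (in both directions, using $\varphi$ and $\varphi^{-1}$) and the isomorphism type (since $\varphi$ is a group isomorphism).

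For the main direction $(3) \Rightarrow (2)$, I would reconstruct the labelled graph $\Gamma$ purely from the iso-type-labelled poset $\mathcal{CF}(G_\Gamma)$. First I identify the maximal elements of $\mathcal{CF}(G_\Gamma)$ with the maximal cliques of $\Gamma$ via $K \mapsto [G_K]$: every finite subgroup of a graph product of finite groups is conjugate into a clique subgroup, and distinct special parabolic subgroups are not conjugate, as one sees by applying the canonical retraction $p_{K_1}\colon G_\Gamma \twoheadrightarrow G_{K_1}$ (well-defined in any graph product, since the defining relations involve only a single vertex group or a single edge) to the putative equation $gG_{K_1}g^{-1} = G_{K_2}$: this forces $p_{K_1}(G_{K_2}) = G_{K_1 \cap K_2}$ to have the same order as $G_{K_1}$, and symmetrically $K_1 = K_2$. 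Next, Krull--Schmidt applied to $G_K = \prod_{v \in K} f(v)$ recovers the unique multiset of directly indecomposable vertex labels of $K$ from the iso type of the class $[G_K]$.

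Then, for any family of maximal cliques $K_1, \ldots, K_r$, intersections of parabolic subgroups are parabolic, so $\bigcap_i G_{K_i} = G_{K_1 \cap \cdots \cap K_r}$. I would prove that this class is the unique maximal common lower bound of the $[G_{K_i}]$ in $\mathcal{CF}$. Indeed, if $[H]$ lies below all $[G_{K_i}]$ then for each $i$ there is a conjugate $H_i \subseteq G_{K_i}$; monotonicity of $Pc$ yields $Pc(H_i) \subseteq G_{K_i}$, and since all the $Pc(H_i)$ are conjugate, uniqueness of the special-parabolic form (again by the retractions) forces $Pc(H_i) = h_i G_Z h_i^{-1}$ for a common subset $Z \subseteq K_1 \cap \cdots \cap K_r$. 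Thus $[H] \leq [G_Z] \leq [G_{K_1 \cap \cdots \cap K_r}]$. Applying Krull--Schmidt to this meet recovers the labels of the shared vertices, and iterating over all tuples of maximal cliques yields the full flag complex structure of $\Gamma$, and hence the labelled graph itself.

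The main obstacle I foresee is the unique-meet step above, because iso-type matching alone is insufficient to rule out non-parabolic common lower bounds (such as diagonal subgroups $\langle vw \rangle$ inside $\langle v \rangle \times \langle w \rangle$), and several distinct conjugacy classes of subgroups of a clique group may realise the same iso type. The resolution hinges on combining Krull--Schmidt with the monotonicity and uniqueness of parabolic closure for graph products of finite groups together with the canonical retractions, which together force the only conjugacy classes able to embed as a common lower bound across several $[G_{K_i}]$ to be those of special parabolic subgroups $G_Z$ with $Z \subseteq \bigcap_i K_i$.
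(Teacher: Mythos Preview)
Your strategy aligns with the paper's (which follows Radcliffe): reconstruct the labelled graph from the poset $\mathcal{CF}$ together with isomorphism types. The arguments for $(2)\Rightarrow(1)\Rightarrow(3)$, for identifying maximal poset elements with maximal cliques, and for showing $\bigwedge_i[G_{K_i}]=[G_{\bigcap_i K_i}]$ are all essentially correct.

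The genuine gap is the final reconstruction step. You assert that iterating meets over all tuples of maximal cliques ``yields the full flag complex structure of $\Gamma$, and hence the labelled graph itself,'' but this is the non-trivial heart of the implication $(3)\Rightarrow(2)$ and you have not justified it. Knowing the maximal cliques and all their intersections, each tagged with a Krull--Schmidt multiset of labels, does not transparently pin down individual vertices or match them up across different maximal cliques. The obstruction is the $T_0$ condition: when two distinct vertices lie in exactly the same set of maximal cliques, neither can be recovered as a meet of maximal elements at all. The paper handles this in two steps: first pass to the $T_0$ quotient, where Radcliffe's Theorem~5.3 reconstructs a unique graph from the meet \emph{and join} structure on $\mathcal{CF}$; then invoke Radcliffe's Theorem~5.4 to see that the directly-indecomposable hypothesis forces a unique way to re-expand each collapsed vertex. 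Your outline needs this reduction, or an equivalent argument, to be complete. Note also that the paper's route uses joins as well as meets in $\mathcal{CF}$, whereas you use only meets of maximal elements.

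One smaller correction: the final sentence of your proposal is false as stated. Diagonal subgroups such as $\langle vw\rangle\leq G_v\times G_w$ \emph{are} common lower bounds of the $[G_{K_i}]$ whenever $\{v,w\}\subseteq\bigcap_i K_i$; they are simply not \emph{maximal} such lower bounds. Your earlier (correct) parabolic-closure argument already shows that every common lower bound lies below $[G_{\bigcap_i K_i}]$, which is exactly what the meet computation requires, so this misstatement is not fatal---but the ``resolution'' you describe is not the actual mechanism at work.
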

%\philip{In this proof we sometimes use $\setminus$ and sometimes a regular $-$, we should be consistent, I dont have a strong opinion on the matter.}

%\samc{I would favor $\setminus$, only because it agrees with the notation in Section 4.}
\begin{proof}
It is obvious that (2) implies (1) and that (1) implies (3). We now adapt Radcliffe's arguments to show that (3) implies (2).

A graph $\Lambda$ is a \emph{$T_0$ graph} in the sense of \cite[Page~1080]{Radcliffe2003}, if for every pair of distinct vertices $v, w\in V(\Lambda)$, there exists a maximal clique which contains exactly one of the two. 

\begin{claim*}
    The result holds if $\Gamma$ and $\Omega$ are finite $T_0$ graphs and the vertex groups are arbitrary finite groups (not necessarily directly indecomposable).
\end{claim*}
\begin{claimproof}[Proof of claim]
In \cite[Theorem 4.2, 4.3]{Radcliffe2003}, Radcliffe describes the greatest lower bounds and least upper bounds in the posets $\mathcal{CF}(G_\Gamma)$ and $\mathcal{CF}(G_\Omega)$ for conjugacy classes of parabolic subgroups. We recount this here:  
If $A,B\subseteq V(\Gamma)$ and the groups $\langle A\rangle$ and $\langle B\rangle$ are finite, we obtain 
    $$[\langle A\cap B\rangle ]=[\langle A\rangle]\bigwedge [\langle B\rangle].$$ 
If additionally $\langle A\cup B\rangle$ is finite, then the least upper bound exists and we have 
    $$[\langle A\cup B\rangle ]=[A]\bigvee [B].$$ 

    We now construct a bijection between $V(\Gamma)$ and $V(\Omega)$ using the order isomorphism $\psi$.  Denote by $\mathcal{M}(\Gamma)$ (resp. $\mathcal{M}(\Omega)$) the set of all maximal cliques in $\Gamma$ (resp. $\Omega$). Note that $\psi$ induces a bijection
    $$\psi'\colon \mathcal{M}(\Gamma)\to \mathcal{M}(\Omega)$$
    such that for every $\Delta\in \mathcal{M}(\Gamma)$ we have $\langle \Delta\rangle \cong \langle \psi'(\Delta)\rangle$ (combine \cite[Lemma 4.5]{Green1990} with \cite[Corollary 3.8]{AntolinMinasyan2015}).
    
    Let $v\in V(\Gamma)$, let $\left\{A_1,\ldots, A_n\right\}$ be the set of all maximal cliques of $\Gamma$ such that $v\in V(A_i)$, and let 
    $$\left\{B_1,\ldots, B_m\right\}=\mathcal{M}(\Gamma)\setminus\left\{A_1,\ldots, A_n\right\}.$$ 
    Since $\Gamma$ is $T_0$ we have
    $\left\{v\right\}=\left(\bigcap_{i=1}^n A_i\right)\setminus\left(\bigcup_{j=1}^m B_j\right).$
    Let $A=\bigcap_{i=1}^n A_i$. 
    %\philip{We previously used $A$ for a maximal clique above. Maybe we should use a different letter?}  
    Then,
    % $$[G_A]=[G_{\bigcap_{i=1}^n A_i}]
    % =\bigwedge_{i=1}^n[G_{A_i}]
    % \mapsto\bigwedge_{i=1}^n \psi[G_{\psi'(A_i)}]
    % =\psi[G_{\bigcap_{i=1}^n \psi'(A_i)}].$$
    % \olga{to the line above: The group $G_{\psi'(A_i)}$ lives in $G_\Omega$, thus it should be like this: ???}
    % \philip{yes.}
    $$\left[G_A\right]=\left[G_{\bigcap_{i=1}^n A_i}\right]
    =\bigwedge_{i=1}^n\left[G_{A_i}\right]
    \mapsto\bigwedge_{i=1}^n \psi\left[G_{A_i}\right]=\bigwedge_{i=1}^n \left[G_{\psi'(A_i)}\right]
    =\left[G_{\bigcap_{i=1}^n \psi'(A_i)}\right].$$

    We define $\phi(A)=\bigcap_{i=1}^n \psi'(A_i)$.  Now, $A\setminus\{v\}=\bigcup_{j=1}^m(A\cap B_j)$ 
    %\olga{$A\setminus\{v\}=\bigcup_{j=1}^m(A\cap B_j)$, right?} 
    and
    \begin{multline}
    [G_{A\setminus\{v\}}]=[G_{\bigcup_{j=1}^m(A\cap B_j)}]=
    \bigvee_{j=1}^m [G_{A\cap B_j}]
    =\bigvee_{j=1}^m \left([G_A]\wedge [G_{B_j}]\right) \\\nonumber
    \mapsto\bigvee_{j=1}^m \left(\psi[G_{A}] \wedge \psi[G_{B_j}]\right) 
    = \bigvee_{j=1}^m \left([G_{\phi(A)}]\wedge [G_{\psi' (B_j)}] \right) 
    = [G_{\bigcup_{j=1}^m\phi(A)\cap\psi'(B_j)}].
    \end{multline}
    %where $\psi'(A):=\bigcap_{i=1}^n \psi'(A_i).$
%\olga{The map $\psi'$ is only defined for maximal cliques and $A$ is in general not maximal, ah OK \psi'(A)=\phi(A) see above}
    
    We define $\phi(A\setminus\{v\})= \bigcup_{j=1}^m\phi(A)\cap\psi'(B_j)$.  Now,
    $$ G_v \cong G_A / G_{A\setminus\{v\}}\cong G_{\phi(A)}/G_{\phi(A\setminus \{v\})}\cong G_{\bigcap_{i=1}^n \psi'(A_i) \setminus\left(\bigcup_{j=1}^m\phi(A)\cap\psi'(B_j)\right)}.$$
    In particular, $X_v=\bigcap_{i=1}^n \psi'(A_i) \setminus\left(\bigcup_{j=1}^m\phi(A)\cap\psi'(B_j)\right)$ is non-empty.  Moreover, by the $T_0$ condition $X_v$ contains at most one element.  Indeed, if not, then two distinct elements in $X_v$ would be contained in the same set of maximal cliques of $\Omega$, contradicting the $T_0$ condition.  Thus, $X_v$ has a unique element which is $\phi(v)$. 
    %\philip{This is a consequence of the definition of $\phi$, not a definition or is this a different $\phi$ to above?}.  
    Repeating the argument with the inverses of $\psi$ and $\psi'$ yields that $\phi_{|V(\Gamma)}\colon V(\Gamma)\to V(\Omega)$ is our desired bijection.  Note that by construction $G_v\cong G_{\phi(v)}$.

    Next, we show that $\phi_{|V(\Gamma)}$, which we for simplicity denote by $\phi$, extends to a graph isomorphism $\phi\colon\Gamma\to\Omega$.  Let $e=\{u,v\}$ denote an edge of $\Gamma$.  There exists a maximal clique $A$ containing $e$ and so $\phi(e)=\{\phi(u),\phi(v)\}$ is contained in $\psi'(A)$, a maximal clique of $\Omega$, where $G_A\cong G_{\psi'(A)}$.  But, then $\phi(e)$ is an edge of $\Omega$ as required.  Whence, $\phi$ is a graph isomorphism.  This completes the proof of the claim.
    \end{claimproof}

    Suppose $\Lambda$ is any finite graph.  Following Radcliffe, we say two vertices are \emph{equivalent} if they cannot be distinguished by the cliques of $\Gamma$. The set of equivalence classes $\{\overline v\}$ is a modular partition and the quotient graph, the \emph{$T_0$-quotient} $\overline \Lambda$ of $\Lambda$, satisfies the $T_0$ condition.

    We now treat the case that $\Gamma$ and $\Omega$ are arbitrary finite graphs with directly indecomposable vertex groups (following \cite[Theorem~5.4]{Radcliffe2003}).  Let $\overline \Gamma$ and $\overline \Omega$ be the respective $T_0$-quotients of $\Gamma$ and $\Omega$.  If $\overline v\in V(\overline \Gamma)$, then $G_{\overline v}=\prod_{u\in \overline v} G_u$.  By the earlier case we proved above, we have a graph isomorphism $\phi\colon \overline\Gamma \to \overline\Omega$ such that $G_{\overline v}\cong G_{\phi(\overline{v})}$.  Now, every finite group has a unique factorisation into a product of directly indecomposable groups (up to isomorphism and ordering of factors).  Thus, we obtain bijections $\phi_{\overline v}\colon \overline v\to \phi(\overline v)$ such that $G_u\cong G_{\phi_{\overline v}(u)}$ for all $u\in \overline v$.  Let $\theta\colon V(\Gamma) \to V(\Omega)$ be the union of the maps $\phi_{\overline v}$.  Then, $\theta$ is easily seen to be a bijection $V(\Gamma)\to V(\Omega)$ and to extend to a graph isomorphism $\theta\colon\Gamma\to \Lambda$ satisfying $G_v\cong G_{\theta(v)}$. 
\end{proof}

%%%%%%%%%%%%%%%%%%%%%%%%%%%%%%%%%%%%%%%%%%%%%%%%%%%%%%%%%%%%%%%%%%%%%%%%%%%%%%%%%%%%%%%%%%%%%%%%%%%%%%%%%%%%%%%%%%%%%%%%%%%%%%%%%%%%%%%%%%
\section{Profinite rigidity}\label{sec:Rigidity}

\subsection{Background on profinite completions}\label{sec:Rigidity:background}

Let $G$ be a group and $\mathcal{N}$ be the set of all finite index normal subgroups of $G$. We equip each $G/N$, $N\in\mathcal{N}$ with the discrete topology and endow $\prod_{N\in\mathcal{N}} G/N$ with the product topology. We define a map $i\colon G\to \prod_{N\in\mathcal{N}} G/N$ by $g\mapsto (gN)_{N\in\mathcal{N}}$. Note, that if $G$ is residually finite, then $i$ is injective. The \emph{profinite completion} of $G$, denoted by $\widehat{G}$, is defined as $\widehat{G}:=\overline{i(G)}$.  Equivalently, $\widehat{G}$ may be constructed as the inverse limit $\varprojlim_{N\in\caln}G/N$.

The homomorphism $i$ has the following universal property:  Let $\mathbf{H}$ be a profinite group and $\phi\colon G\to\mathbf{H}$ be a continuous homomorphism.  Then, there exists a unique continuous homomorphism $\widehat\phi\colon\widehat{G}\to\mathbf{H}$ making the following diagram commute
\[\begin{tikzcd}
    G \arrow[r,"i"] \arrow[rd,"\phi"] & \widehat{G} \arrow[d,"\widehat{\phi}"]\\
    & \mathbf{H}.
\end{tikzcd}\]

The next theorem shows that the set $\mathcal{F}(G)$ of isomorphism classes of finite quotients of a finitely generated residually finite group $G$ encodes the same information as $\widehat{G}$.

\begin{thm} \emph{\cite{Dixon1982}}
\label{Dixon}
Let $G$ and $H$ be finitely generated residually finite groups. Then $\mathcal{F}(G)=\mathcal{F}(H)$ if and only if $\widehat{G}\cong\widehat{H}$.
\end{thm}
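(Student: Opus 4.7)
For the easier direction ($\widehat G\cong\widehat H \Rightarrow \mathcal F(G)=\mathcal F(H)$), the plan is to establish a natural bijection between $\mathcal F(G)$ and the set of isomorphism classes of continuous finite quotients of $\widehat G$ by using the universal property stated just above. An ordinary quotient $q\colon G\twoheadrightarrow Q$ with $Q$ finite and discrete extends uniquely to a continuous $\widehat q\colon\widehat G\to Q$, which is surjective because $\widehat q\circ i=q$ is. Conversely, any continuous surjection $p\colon\widehat G\twoheadrightarrow Q$ with $Q$ finite discrete restricts along $i$ to a surjection $G\twoheadrightarrow Q$, since $i(G)$ is dense in $\widehat G$ and $Q$ carries the discrete topology. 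These constructions are mutually inverse; running the argument for both $G$ and $H$ and using $\widehat G\cong\widehat H$ as topological groups yields $\mathcal F(G)=\mathcal F(\widehat G)=\mathcal F(\widehat H)=\mathcal F(H)$.

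For the harder direction, the plan is to exploit finite generation in an essential way. Since $G$ is generated by some $d$ elements, $|\operatorname{Hom}(G,Q)|\leq|Q|^{d}$ for every finite $Q$, so the multiplicity $r_Q(G):=|\{N\lhd G : G/N\cong Q\}|$ is finite. The combinatorial core of the argument, following Dixon--Formanek--Poland--Ribes, is to upgrade the bare set equality $\mathcal F(G)=\mathcal F(H)$ to the refined multiplicity equality $r_Q(G)=r_Q(H)$ for every finite $Q$. This is carried out by a M\"obius/inclusion--exclusion argument: the count $r_Q(G)$ is detected by examining whether larger finite groups constructed from $Q$ (subdirect powers $Q^{k}$, fibre products, and extensions of $Q$) appear in $\mathcal F(G)$, and one matches these presence/absence data on the two sides inductively on $|Q|$ to deduce matching multiplicities.

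Once multiplicities are matched, $\widehat G$ and $\widehat H$ have abstractly isomorphic inverse systems at every finite level, and the plan is to assemble a global topological isomorphism by a K\"onig-style compactness argument. At depth $n$ one records the (nonempty, finite) set of group isomorphisms $G/M_{n}(G)\to H/M_{n}(H)$ between the canonical characteristic finite quotients $M_{n}(G):=\bigcap_{[G:N]\leq n}N$ that are compatible with the chosen isomorphism at depth $n-1$; applying K\"onig's lemma to the resulting rooted locally finite tree extracts an infinite consistent branch, which passes to the inverse limit to give the required $\widehat G\cong\widehat H$. The main obstacle is the multiplicity-matching step: passing from set-level agreement $\mathcal F(G)=\mathcal F(H)$ to count-level equality $r_Q(G)=r_Q(H)$ is the delicate combinatorial content of the theorem, and is where finite generation does its essential work.
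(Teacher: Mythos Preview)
The paper does not prove this theorem; it is stated as a cited result from \cite{Dixon1982} (Dixon--Formanek--Poland--Ribes) with no accompanying argument. So there is no proof in the paper to compare your proposal against.

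That said, your outline deviates from the standard argument in the cited reference, and the deviation introduces a gap. Your easy direction is fine. For the hard direction, the standard proof bypasses the multiplicity step entirely: one observes directly that $G/M_n(G)$ lies in $\mathcal F(G)=\mathcal F(H)$, so there is a surjection $H\twoheadrightarrow G/M_n(G)$; since $M_n\bigl(G/M_n(G)\bigr)=1$, the kernel of this surjection contains $M_n(H)$, whence $|H/M_n(H)|\geq |G/M_n(G)|$. By symmetry the two quotients have equal order and hence $H/M_n(H)\cong G/M_n(G)$. Your K\"onig's lemma step then finishes the argument exactly as you describe.

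Your proposed detour through $r_Q(G)=r_Q(H)$ is problematic on two counts. First, M\"obius inversion on the subgroup lattice recovers $|\mathrm{Epi}(G,Q)|$ from the numbers $|\mathrm{Hom}(G,K)|$ for $K\leq Q$, but $\mathcal F(G)$ only records which of these are nonzero, not their values; your ``presence/absence'' induction does not explain how to extract actual counts from this binary data, and for non-simple $Q$ this is genuinely unclear. Second, even granting $r_Q(G)=r_Q(H)$ for all finite $Q$, you have not justified the passage to $G/M_n(G)\cong H/M_n(H)$: matching quotient multiplicities is not known to determine a finite group up to isomorphism. The equality of multiplicities is in fact a \emph{consequence} of the theorem (once $\widehat G\cong\widehat H$, open normal subgroups with a given quotient correspond bijectively), rather than a natural stepping stone toward it.
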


Note that by the work of Nikolov--Segal \cite{NikolovSegal2007I,NikolovSegal2007II} we have that $\widehat{G}$ is isomorphic to $\widehat{H}$ as a topological group if and only if $\widehat{G}$ is isomorphic to $\widehat{H}$ as an abstract group.

For profinite groups $\mathbf G_1$ and $\mathbf G_2$ with a common closed subgroup $\mathbf H$, we denote the pushout  $\mathbf G_1$ and $\mathbf G_2$ over $\mathbf{H}$ by $\mathbf G=\mathbf{G}_1\coprod_\mathbf{H} \mathbf{G}_2$.  If the natural maps from $\mathbf G_1$ and $\mathbf G_2$ to $\mathbf G$ are embeddings then we call $\mathbf G$ the \emph{profinite amalgamated product} of $\mathbf G_1$ and $\mathbf G_2$ along $\mathbf H$.

\begin{thm}\emph{\cite[\S 5.6]{MelnikovZalesskii1989}}
\label{AmalgamFiniteSubgroups}
Let $\mathbf G_1\coprod_\mathbf{H} \mathbf G_2$ be a profinite amalgamed product. If $F\subseteq \mathbf G_1\coprod_\mathbf{H} \mathbf G_2$ is a finite subgroup, then $F$ is contained in a conjugate of $\mathbf G_1$ or $\mathbf G_2$.
\end{thm}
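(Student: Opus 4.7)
The plan is to adapt the classical Bass--Serre theoretic argument to the profinite setting.  To begin with, I would construct the standard profinite Bass--Serre tree $\mathbf{T}$ associated to the profinite amalgamated product $\mathbf{G}:=\mathbf{G}_1 \coprod_\mathbf{H} \mathbf{G}_2$.  The vertex set of $\mathbf{T}$ is the disjoint union of the profinite coset spaces $\mathbf{G}/\mathbf{G}_1$ and $\mathbf{G}/\mathbf{G}_2$, while the edge set is $\mathbf{G}/\mathbf{H}$; the incidence maps send an edge $g\mathbf{H}$ to the vertices $g\mathbf{G}_1$ and $g\mathbf{G}_2$.  This is naturally a profinite graph on which $\mathbf{G}$ acts continuously, and the stabilisers of the base vertices and edge are $\mathbf{G}_1$, $\mathbf{G}_2$, and $\mathbf{H}$ respectively.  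Crucially, because the amalgamation is legitimately a pushout of embeddings, $\mathbf{T}$ is an (abstract) tree, and profinite Bass--Serre theory packages it as a profinite tree in the sense of Melnikov--Zalesskii.

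The central step is to prove that any finite subgroup $F$ of $\mathbf{G}$ acting continuously on a profinite tree has a fixed vertex.  This is the profinite analogue of the classical statement that a finite group acting without inversions on a tree fixes a vertex.  I would prove it by writing $\mathbf{T}=\varprojlim_i T_i$ as an inverse limit of finite discrete $F$-trees (after possibly passing to a subdivision to eliminate inversions), apply the classical fixed point theorem to each $T_i$ to obtain a non-empty compact fixed-point set $\Fix_F(T_i)$, and then use the fact that an inverse limit of a cofinal system of non-empty compact Hausdorff spaces is non-empty to conclude that $\Fix_F(\mathbf{T})$ is non-empty.

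Once a fixed vertex $v\in\mathbf{T}$ is secured, by construction $v=g\mathbf{G}_i$ for some $g\in\mathbf{G}$ and $i\in\{1,2\}$, so $F$ lies in the stabiliser of $v$, which forces $F\subseteq g\mathbf{G}_ig^{-1}$.  This is exactly the conclusion.

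The main obstacle is the fixed point theorem for finite groups on profinite trees: making precise the notion of profinite tree as an inverse limit of finite discrete trees, ensuring the inverse system is compatible with the $F$-action, and verifying that inversions can be avoided so that classical fixed point theory applies at each finite level.  Once this framework is in hand, the structural part of the argument reduces to the well-understood classical case.
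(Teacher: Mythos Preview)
The paper does not give its own proof of this theorem: it is quoted directly from Melnikov--Zalesskii \cite[\S 5.6]{MelnikovZalesskii1989} and used as a black box.  So there is no in-paper argument to compare against.

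Your outline is the standard profinite Bass--Serre strategy and is essentially how the result is proved in the literature (see also Ribes' monograph on profinite graphs).  One technical caution: the sentence ``because the amalgamation is legitimately a pushout of embeddings, $\mathbf{T}$ is an (abstract) tree'' is not accurate---the standard graph $\mathbf{T}$ is a \emph{profinite} tree in the cohomological sense (profinitely simply connected), not in general a simplicial tree, and the finite quotient graphs $T_i$ appearing in an arbitrary inverse-limit presentation of $\mathbf{T}$ need not themselves be trees.  The usual fix is either to work with the cohomological definition directly, or to exploit the specific structure of the amalgam to produce a cofinal system of open normal subgroups $N\trianglelefteq\mathbf{G}$ for which the induced quotient graph $T/N$ is a genuine finite tree; then your compactness argument on the inverse system of fixed-point sets goes through.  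You correctly flag this as the main obstacle, so the plan is sound, but be aware that ``inverse limit of finite $F$-trees'' is something to be arranged rather than assumed.
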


\begin{lemma}
\label{AmalgamProfCompletion}
Let $G\cong A\ast_C B$ be a finitely generated residually finite group. If
$A$, $B$ and $C$ are retracts of $G$,
then $\widehat{G}\cong\widehat{A}\coprod_{\widehat{C}}\widehat{B}$.
\end{lemma}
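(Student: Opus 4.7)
My plan is to verify directly that $\widehat G$ satisfies the universal property of the pushout $\widehat A \coprod_{\widehat C} \widehat B$ in the category of profinite groups. First I would record some preliminary embedding facts extracted from the retraction hypothesis. Since $A$, $B$, $C$ are subgroups of the residually finite group $G$ they are themselves residually finite, and so inject into their own profinite completions. The retraction $r_A\colon G \to A$ induces by functoriality a continuous homomorphism $\widehat r_A\colon \widehat G \to \widehat A$; its composition with the canonical map $\widehat A \to \widehat G$ agrees with the identity on the dense subgroup $A \subseteq \widehat A$ and so, by continuity, equals the identity on $\widehat A$. Thus $\widehat A$ embeds as a closed subgroup of $\widehat G$, and the same argument gives closed embeddings of $\widehat B$ and $\widehat C$ into $\widehat G$. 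Moreover the restriction $r_C|_A\colon A \to C$ is itself a retraction, since $C \subseteq A$ and $r_C$ fixes $C$ pointwise; together with the analogous statement for $B$ this yields closed embeddings $\widehat C \hookrightarrow \widehat A$ and $\widehat C \hookrightarrow \widehat B$, which is what gives meaning to the pushout notation in the first place.

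Next I would construct the comparison map. The inclusions $A, B \hookrightarrow G$ induce continuous homomorphisms $\widehat A \to \widehat G$ and $\widehat B \to \widehat G$ that agree on $C$, hence, by density and continuity, on $\widehat C$. The universal property of the pushout then produces a unique continuous homomorphism
\[
\Phi \colon \widehat A \coprod_{\widehat C} \widehat B \longrightarrow \widehat G.
\]
The image of $\Phi$ is compact, hence closed in the Hausdorff group $\widehat G$, and contains $A \cup B$, which generates the dense subgroup $G$; therefore $\Phi$ is surjective. To see that $\Phi$ is an isomorphism I would argue that $\widehat G$ itself satisfies the universal property of the pushout: given any profinite group $\mathbf H$ and continuous homomorphisms $f_A\colon \widehat A \to \mathbf H$, $f_B\colon \widehat B \to \mathbf H$ agreeing on $\widehat C$, restriction gives compatible homomorphisms $A \to \mathbf H$ and $B \to \mathbf H$, and the universal property of the abstract amalgamated product $G = A \ast_C B$ assembles them into a unique homomorphism $\phi\colon G \to \mathbf H$. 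Because every open subgroup of the compact group $\mathbf H$ has finite index, $\phi$ is automatically continuous for the profinite topology on $G$ and extends uniquely to a continuous $\widehat\phi\colon \widehat G \to \mathbf H$. Uniqueness of such extensions is forced by density of $G$ in $\widehat G$, and the general principle that objects with the same universal property are canonically isomorphic then makes $\Phi$ the desired isomorphism.

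The main technical point, and the only step where the retraction hypothesis is genuinely needed, is the injectivity of the natural maps $\widehat A, \widehat B, \widehat C \to \widehat G$ established in the preliminaries; without such embeddings the expression $\widehat A \coprod_{\widehat C} \widehat B$ would not refer to a bona fide amalgamated product in the sense of the preceding paragraph of the excerpt. The remainder of the argument is a formal manipulation of universal properties, greatly simplified by the general fact that any homomorphism from an abstract group into a profinite group is automatically continuous for the profinite topology on the source.
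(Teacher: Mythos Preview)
Your argument is correct and takes a genuinely different route from the paper. The paper invokes \cite[Proposition~6.5.3]{RibesBook} to obtain $\widehat{G}\cong\overline{A}\coprod_{\overline{C}}\overline{B}$, where $\overline{H}$ denotes the closure of $H$ in $\widehat{G}$, and then uses the retraction hypothesis (via results of Minasyan on virtual retracts) to show that each subgroup carries the full profinite topology, so that $\overline{A}\cong\widehat{A}$, etc. You instead verify directly that $\widehat{G}$ satisfies the universal property of the pushout in profinite groups, which is essentially the observation that profinite completion, being a left adjoint, preserves colimits. Your approach is more self-contained and in fact reveals that the isomorphism $\widehat{G}\cong\widehat{A}\coprod_{\widehat{C}}\widehat{B}$ at the level of pushouts holds for \emph{any} amalgam $G=A\ast_C B$, with the retraction hypothesis entering only to guarantee that the factors embed (so that the pushout is a profinite amalgamated product in the paper's sense, as needed for \Cref{AmalgamFiniteSubgroups}). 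The paper's route, by contrast, makes the distinction between closure and completion explicit and locates the retraction hypothesis precisely at the step $\overline{A}\cong\widehat{A}$; it is shorter given the citation but less transparent about why the hypothesis is there. One small point: you should state explicitly that the extension $\widehat{\phi}$ satisfies $\widehat{\phi}\circ(\widehat{A}\to\widehat{G})=f_A$ (and similarly for $B$); this follows by density of $A$ in $\widehat{A}$ and continuity, but you only discuss uniqueness.
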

\begin{proof}
    By \cite[Propositon~6.5.3]{RibesBook} $\widehat{G}\cong\overline{A}\coprod_{\overline{C}}\overline{B}$ where $\overline{H}$ denotes the closure of the image of $H$ in $\widehat G$.  Since $A$, $B$, and $C$ are retracts of $G$ each of them has the full profinite topology induced by $G$.  That is, every finite index subgroup of $A$ (resp. $B$, $C$) is a closed subgroup of $G$ (with respect to the profinite topology on $G$).  Indeed, (virtual) retracts are closed in the profinite topology \cite[Lemma~2.2]{Minasyan2021} and finite index subgroups of (virtual) retracts are virtual retracts \cite[Lemma~3.2(iv)]{Minasyan2021}.  Hence, $\overline{A}\cong\widehat A$ (and similarly for $B$ and $C$).
\end{proof}

\subsection{Profinite rigidity and graph products}\label{sec:rigidity:Graph}

The following lemma is straightforward. 

\begin{lemma}
\label{FSSInjective}
Let $G$ be a finitely generated residually finite group. If $G$ is finite subgroup conjugacy separable, then the canonical inclusion $G\hookrightarrow \widehat{G}$ induces an injective map $\colon\mathcal{CF}(G)\to\mathcal{CF}(\widehat{G})$.
\end{lemma}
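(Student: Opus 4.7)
The plan is to prove this contrapositively by exploiting the universal property of the profinite completion. First I would verify that the map is well-defined: the inclusion $G\hookrightarrow\widehat{G}$ sends a finite subgroup $H\leqslant G$ to a finite (hence closed) subgroup of $\widehat{G}$, and $G$-conjugate finite subgroups are obviously $\widehat{G}$-conjugate, so the assignment $[H]\mapsto[H]$ descends to a map $\iota\colon\mathcal{CF}(G)\to\mathcal{CF}(\widehat{G})$.

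Next, suppose $A, B\leqslant G$ are finite subgroups with $\iota([A])=\iota([B])$, i.e.\ there exists $\widehat{g}\in\widehat{G}$ with $\widehat{g}A\widehat{g}^{-1}=B$. I want to conclude $A$ and $B$ are already conjugate in $G$, so I assume for contradiction that they are not. By the hypothesis of finite subgroup conjugacy separability, there exists a homomorphism $\varphi\colon G\to F$ with $F$ finite such that $\varphi(A)$ is not conjugate to $\varphi(B)$ in $F$. Composing with the inclusion $F\hookrightarrow F$ (as a profinite group), the universal property of the profinite completion (as recalled just after the definition of $\widehat{G}$) produces a continuous extension $\widehat{\varphi}\colon\widehat{G}\to F$ with $\widehat{\varphi}\circ i=\varphi$.

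Applying $\widehat{\varphi}$ to the relation $\widehat{g}A\widehat{g}^{-1}=B$ yields
\[\widehat{\varphi}(\widehat{g})\,\varphi(A)\,\widehat{\varphi}(\widehat{g})^{-1}=\varphi(B)\]
in $F$, so $\varphi(A)$ and $\varphi(B)$ are conjugate in $F$, contradicting the choice of $\varphi$. Hence $A$ and $B$ must be conjugate in $G$, so $[A]=[B]$ in $\mathcal{CF}(G)$, which proves $\iota$ is injective.

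There is no significant obstacle here; the argument is essentially a direct translation of the definition of finite subgroup conjugacy separability into the language of profinite completions via the universal property. The only point requiring a moment of care is well-definedness of $\iota$, but since finite subgroups of a Hausdorff group are automatically closed, there is nothing to check beyond tautologies.
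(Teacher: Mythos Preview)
Your proof is correct and follows essentially the same approach as the paper: assume two non-conjugate finite subgroups become conjugate in $\widehat{G}$, use finite subgroup conjugacy separability to find a separating finite quotient, extend it to $\widehat{G}$ via the universal property, and derive a contradiction. The only difference is that you spell out the (trivial) well-definedness of the induced map, which the paper omits.
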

% \begin{proof}
% Let $H$ and $K$ be non-conjugate finite subgroups of $G$.  We will show that they remain non-conjugate in $\widehat G$.  Suppose not, then there exists $g\in \widehat G$ such that $gHg^{-1}=K$.  Since $G$ is finite subgroup conjugacy separable there exists a finite quotient $\alpha\colon G\onto Q$ where $\alpha(H)$ is not conjugate to $\alpha(K)$.  By the universal property of the profinite completion we obtain a map $\widehat \alpha\colon\widehat G\onto Q$ which restricts to $\alpha$ on $G$.  But now $\widehat\alpha(g)\alpha(H)\widehat\alpha(g^{-1})=\alpha(K)$ which is a contradiction.  Hence, the lemma.
% \end{proof}

\begin{prop}
\label{EvenCliqueSubgroupsFinite1}
Let $G_\Gamma$ be a graph product of finite directly indecomposable groups.
Then, the canonical inclusion $G_\Gamma\hookrightarrow \widehat{G_\Gamma}$ induces an order isomorphism $\varphi\colon\mathcal{CF}(G_\Gamma)\to\mathcal{CF}(\widehat{G_\Gamma})$ such that for any $[A]\in\mathcal{CF}(G_\Gamma)$ and $B\in \varphi([A])$ we have $A\cong B$.
\end{prop}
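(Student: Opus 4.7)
The plan is to decompose the proof into well-definedness and order-preservation, injectivity, surjectivity, and order-reflection, with the isomorphism-type clause falling out for free. First, since $\varphi$ is induced by the canonical inclusion $G_\Gamma \hookrightarrow \widehat{G_\Gamma}$ (sending $[A]$ to the class of $A$ viewed inside $\widehat{G_\Gamma}$), it is automatically well-defined and order-preserving: any containment $A \subseteq gBg^{-1}$ with $g \in G_\Gamma$ persists in $\widehat{G_\Gamma}$. Injectivity of $\varphi$ follows immediately from \Cref{FSSInjective} combined with \Cref{FiniteSubgroupsSeparableGraphProducts}.

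The main content is surjectivity, which I would establish by induction on $|V(\Gamma)|$. In the base case $\Gamma$ is complete (in particular whenever $|V| \leq 1$), so $G_\Gamma$ is a finite direct product of finite groups, hence itself finite, and $\widehat{G_\Gamma} = G_\Gamma$, making the statement vacuous. Otherwise pick non-adjacent $v, w \in V$. \Cref{GraphProductAmalgam} yields
\[G_\Gamma \cong G_{\st(v)} *_{G_{\lk(v)}} G_{V \setminus \{v\}}.\]
Each special parabolic subgroup appearing here is a retract of $G_\Gamma$ via the canonical projection that kills vertex groups outside the prescribed subset, so \Cref{AmalgamProfCompletion} upgrades this to a profinite amalgamated product decomposition $\widehat{G_\Gamma} \cong \widehat{G_{\st(v)}} \coprod_{\widehat{G_{\lk(v)}}} \widehat{G_{V \setminus \{v\}}}$. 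Given a finite subgroup $F \leq \widehat{G_\Gamma}$, \Cref{AmalgamFiniteSubgroups} conjugates $F$ into either $\widehat{G_{\st(v)}}$ or $\widehat{G_{V \setminus \{v\}}}$. Both of these are profinite completions of graph products of finite directly indecomposable groups on strictly fewer vertices ($w$ is absent from $\st(v)$ and $v$ is absent from $V \setminus \{v\}$), so the inductive hypothesis produces a further conjugation placing $F$ inside $G_\Gamma$; since the ambient subgroups embed into $\widehat{G_\Gamma}$, this is a single conjugation in $\widehat{G_\Gamma}$.

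For order reflection, suppose $A, B$ are finite subgroups of $G_\Gamma$ with $A \subseteq \hat g B \hat g^{-1}$ for some $\hat g \in \widehat{G_\Gamma}$. Setting $A' := \hat g^{-1} A \hat g$ gives a finite subgroup of $B$, hence of $G_\Gamma$. Since $A$ and $A'$ are finite subgroups of $G_\Gamma$ that are conjugate in $\widehat{G_\Gamma}$, the injectivity already proven forces them to be conjugate in $G_\Gamma$; a conjugator $g \in G_\Gamma$ then witnesses $A \subseteq g^{-1} B g$. Finally, the assertion that $B \in \varphi([A])$ forces $A \cong B$ is automatic, since any such $B$ is by definition a conjugate of $A$ in $\widehat{G_\Gamma}$, and conjugation is a group automorphism.

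The main obstacle is the inductive surjectivity step: it requires that $\widehat{G_\Gamma}$ genuinely inherits an amalgamated-product decomposition, rather than merely a pushout of profinite groups. This in turn requires verifying that $G_{\st(v)}, G_{\lk(v)}, G_{V \setminus \{v\}}$ are retracts of $G_\Gamma$ before \Cref{AmalgamProfCompletion} can be invoked. I would take care to explicitly define the retraction on vertex generators and observe that it respects both the relations within each vertex group and the commutation relations across edges of the graph product.
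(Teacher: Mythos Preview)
Your proof is correct and follows essentially the same strategy as the paper: injectivity via finite subgroup conjugacy separability (\Cref{FiniteSubgroupsSeparableGraphProducts} plus \Cref{FSSInjective}), surjectivity by repeatedly applying the amalgam decomposition of \Cref{GraphProductAmalgam} together with \Cref{AmalgamProfCompletion} and \Cref{AmalgamFiniteSubgroups} (which you phrase as a clean induction on $|V(\Gamma)|$, while the paper simply says ``repeating this process finitely many times''), and the retract property of special parabolics. You are in fact more careful than the paper in one respect: you supply an explicit argument for order reflection using injectivity, whereas the paper only asserts that $\varphi$ is order preserving ``by construction'' and does not separately justify that the inverse is order preserving.
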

\begin{proof}
By \Cref{FiniteSubgroupsSeparableGraphProducts} the graph product $G_\Gamma$ is finite subgroup conjugacy separable. Since $G_\Gamma$ is a finitely generated residually finite group, the injectivity of $\varphi$ follows by \Cref{FSSInjective}.  It remains to show surjectivity and that $\varphi$ is order preserving.

If $\Gamma$ is a clique, then $G_\Gamma$ is finite and the conclusion of the proposition follows immediately since $\widehat{G_\Gamma}=G_\Gamma$. Thus, we may assume that $\Gamma$ is not a clique. By \Cref{GraphProductAmalgam} the group $G_\Gamma$ is an amalgamated product of special subgroups $G_\Gamma=A*_C B$. Special parabolic subgroups of $G_\Gamma$ are retracts of $G_\Gamma$, therefore we can apply \Cref{AmalgamProfCompletion} to obtain $\widehat{G_\Gamma}=\widehat{A}\coprod_{\widehat{C}} \widehat{B}$.

Let $[F]\in\mathcal{CF}(\widehat{G_\Gamma})$. Then $F$ is contained in a conjugate of $\widehat{A}$ or $\widehat{B}$ by \Cref{AmalgamFiniteSubgroups}. If $A$ is not a clique, then we decompose $A$ again into an amalgamated product. Repeating this process finitely many times we obtain that $F$ is contained in a conjugate of $\widehat{A'}$ where $A'$ is a clique and hence a finite subgroup. Thus, $\widehat{A'}=A'$ and $F\subseteq gA'g^{-1}$. In particular, there exists a finite subgroup $A^{''}\subseteq G_\Gamma$ such that $F=gA^{''}g^{-1}$. Hence, $\varphi([A^{''}])=[F]$, which shows the surjectivity of $\varphi$. Clearly, by construction $\varphi$ is order preserving.
\end{proof}

We now prove our first result from the introduction.

\begin{thm}
\label{GraphProductsProfiniteRigidity}
    Let $G_\Gamma$ and $G_\Omega$ be graph products of finite groups. Then $G_\Gamma\cong G_\Omega$ if and only if $\widehat{G_\Gamma}\cong\widehat{G_\Omega}$.
\end{thm}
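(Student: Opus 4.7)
The plan is to reduce this to \Cref{GraphProducts} using the profinite-conjugacy information collected in \Cref{EvenCliqueSubgroupsFinite1}. The forward implication is immediate from the universal property of profinite completion, so all of the work lies in proving that an isomorphism $\widehat{G_\Gamma}\cong\widehat{G_\Omega}$ forces $G_\Gamma\cong G_\Omega$.

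First, I would reduce to the case where all vertex groups are finite and directly indecomposable. By the Krull--Schmidt theorem, each vertex group decomposes uniquely as a direct product of finite directly indecomposable factors; replacing each vertex by a clique whose vertices carry these factors (with edges of label $2$ added to all former neighbours) produces a new graph $\Gamma'$ with directly indecomposable vertex groups and $G_{\Gamma'}\cong G_\Gamma$. Doing the same for $\Omega$, we may assume from the outset that both $\Gamma$ and $\Omega$ have finite directly indecomposable vertex groups, so that \Cref{GraphProducts} applies.

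Next, I would transport the entire problem to the language of posets of conjugacy classes of finite subgroups. Any abstract isomorphism $\Phi\colon\widehat{G_\Gamma}\to\widehat{G_\Omega}$ clearly induces an order isomorphism $\Phi_\ast\colon\mathcal{CF}(\widehat{G_\Gamma})\to\mathcal{CF}(\widehat{G_\Omega})$ sending each class to an isomorphic subgroup. By \Cref{EvenCliqueSubgroupsFinite1}, the natural inclusions provide order isomorphisms
\[\varphi_\Gamma\colon\mathcal{CF}(G_\Gamma)\xrightarrow{\ \sim\ }\mathcal{CF}(\widehat{G_\Gamma}),\qquad \varphi_\Omega\colon\mathcal{CF}(G_\Omega)\xrightarrow{\ \sim\ }\mathcal{CF}(\widehat{G_\Omega}),\]
each preserving isomorphism types of the underlying finite groups. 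Composing, $\psi:=\varphi_\Omega^{-1}\circ\Phi_\ast\circ\varphi_\Gamma$ is an order isomorphism $\mathcal{CF}(G_\Gamma)\to\mathcal{CF}(G_\Omega)$ with $A\cong B$ whenever $B\in\psi([A])$.

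Finally, I would apply the implication $(3)\Rightarrow(1)$ of \Cref{GraphProducts} to the isomorphism $\psi$ to conclude $G_\Gamma\cong G_\Omega$. The only delicate point, and the one I would check carefully, is that \Cref{EvenCliqueSubgroupsFinite1} really does give surjectivity and type-preservation simultaneously, so that $\psi$ is genuinely an order isomorphism of poset data enriched with isomorphism classes — but this is exactly what that proposition asserts. The rest is just composition of isomorphisms; no further profinite Bass--Serre machinery beyond what has already been deployed is required.
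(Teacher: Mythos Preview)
Your proposal is correct and follows essentially the same route as the paper: reduce to directly indecomposable vertex groups, use \Cref{EvenCliqueSubgroupsFinite1} on each side to identify $\mathcal{CF}(G_\Gamma)$ and $\mathcal{CF}(G_\Omega)$ with $\mathcal{CF}(\widehat{G_\Gamma})$ and $\mathcal{CF}(\widehat{G_\Omega})$, transport the isomorphism of completions across, and then invoke the implication $(3)\Rightarrow(1)$ of \Cref{GraphProducts}. The paper's write-up is terser but the argument is the same.
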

\begin{proof}
If $G_\Gamma\cong G_\Omega$, then $\widehat{G_\Gamma}\cong\widehat{G_\Omega}$ by \Cref{Dixon}. 

We note that if a vertex group splits as a direct product, we can replace the corresponding vertex with a clique corresponding to the direct decomposition to obtain an isomorphic graph product of finite directly indecomposable vertex groups. Thus, we can assume without loss of generality that all vertex groups are directly indecomposable.

If $\widehat{G_\Gamma}\cong\widehat{G_\Omega}$, then by \Cref{EvenCliqueSubgroupsFinite1} we get order isomorphisms
$$\mathcal{CF}(G_\Gamma)\to\mathcal{CF}(\widehat{G_\Gamma})\to\mathcal{CF}(\widehat{G_\Omega})\to\mathcal{CF}(G_\Omega).$$
Finally, by \Cref{GraphProducts} we obtain $G_\Gamma\cong G_\Omega$.
\end{proof}

\subsection{Profinite rigidity and RACGs}\label{sec:Rigidity:RACGs}

\begin{lemma}
\label{EvenCliqueSubgroupsFinite}
    Let $W_\Gamma$ and $W_\Omega$ be Coxeter groups. Assume that $W_\Gamma$ is a right-angled Coxeter group. If $\widehat{W_\Gamma}\cong\widehat{W_\Omega}$, then $W_\Omega$ is also a right-angled Coxeter group.
\end{lemma}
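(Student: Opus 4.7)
The plan is to argue by contradiction: assume $W_\Omega$ carries some edge with label $m \geq 3$, exhibit a finite cyclic subgroup of order $m$ inside $\widehat{W_\Omega}\cong \widehat{W_\Gamma}$, and then show via \Cref{EvenCliqueSubgroupsFinite1} that $\widehat{W_\Gamma}$ can only contain finite subgroups of exponent $2$. For the first step, note that the two Coxeter generators sitting at the endpoints of an edge with label $m$ generate, as a special parabolic, the dihedral group of order $2m$; this is standard Coxeter theory. When $m\geq 3$ this dihedral subgroup has an element of order $m\geq 3$ in $W_\Omega$. Since Coxeter groups are residually finite, the canonical map $W_\Omega\hookrightarrow \widehat{W_\Omega}$ is injective, and this element persists as an element of order $m$ in $\widehat{W_\Omega}$; pushing through the hypothesised isomorphism gives an element of order $m\geq 3$ in $\widehat{W_\Gamma}$.

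For the second step, the key observation is that $W_\Gamma$, being right-angled, is a graph product of copies of $C_2$, and $C_2$ is finite and directly indecomposable. Thus the hypothesis of \Cref{EvenCliqueSubgroupsFinite1} is satisfied, and the proposition supplies an order isomorphism $\mathcal{CF}(W_\Gamma)\to\mathcal{CF}(\widehat{W_\Gamma})$ that moreover preserves abstract isomorphism type. Combined with the classical fact that every finite subgroup of a Coxeter group is contained in a conjugate of a spherical special parabolic (used already in the proof of \Cref{FiniteSubgroupsSeparable}), and the observation that spherical special parabolics in a RACG are exactly the elementary abelian $2$-groups $(C_2)^k$ corresponding to cliques of $\Gamma$, this shows that every finite subgroup of $\widehat{W_\Gamma}$ has exponent at most $2$. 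This directly contradicts the order-$m$ element produced in the first step.

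The main obstacle is not in the contradiction, which reduces to a one-line comparison of exponents, but in lining up the hypotheses of the quoted results. One must check (i) that a RACG is genuinely covered by \Cref{EvenCliqueSubgroupsFinite1}, which needs only that $C_2$ is directly indecomposable; (ii) that the statement of \Cref{EvenCliqueSubgroupsFinite1} records the abstract isomorphism type of the finite subgroup, not merely the conjugacy class, so that an order-$m$ element in $\widehat{W_\Gamma}$ forces an order-$m$ element in $W_\Gamma$; and (iii) that $W_\Omega$ really is residually finite, which is again classical for Coxeter groups. All three are either in the paper already or standard, so the proof should be short: cite \Cref{EvenCliqueSubgroupsFinite1}, invoke residual finiteness, and produce the dihedral element.
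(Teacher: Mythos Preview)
Your proposal is correct and follows essentially the same route as the paper: embed $W_\Omega$ into $\widehat{W_\Gamma}$ via residual finiteness, use \Cref{EvenCliqueSubgroupsFinite1} to see that all finite subgroups of $\widehat{W_\Gamma}$ are elementary abelian $2$-groups, and conclude that $W_\Omega$ is right-angled. The only cosmetic difference is that the paper argues directly (all finite subgroups of $W_\Omega$ are direct products of $C_2$'s, hence $W_\Omega$ is right-angled) rather than by contradiction via a single dihedral edge.
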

\begin{proof}
    Maximal finite subgroups of $W_\Gamma$ are isomorphic to direct products of cyclic groups of order $2$.  By \Cref{EvenCliqueSubgroupsFinite1} we see that maximal finite subgroups in $\widehat{W_\Gamma}$ are isomorphic to direct products of cyclic groups of order $2$. Since Coxeter groups are residually finite we have $W_\Omega$ embeds into $\widehat{W_\Gamma}$.  Thus, finite subgroups in $W_\Omega$ are isomorphic to direct products of cyclic groups of order $2$.  It follows that the Coxeter group $W_\Omega$ is right-angled.
\end{proof}

We now prove our second result from the introduction.  As explained earlier the result generalises \cite[Theorem~6]{KrophollerWilkes2016} from RACGs to the class of all Coxeter groups.

\begin{thm}
\label{CoxeterProfiniteRigid}
Let $W_\Gamma$ be a right-angled Coxeter group. Then, $W_\Gamma$ is profinitely
rigid relative to the class of Coxeter groups.
\end{thm}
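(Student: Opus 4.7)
The plan is to show that this theorem follows almost immediately by combining the two main tools already developed: \Cref{EvenCliqueSubgroupsFinite} and \Cref{GraphProductsProfiniteRigidity}.

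Suppose $W_\Omega$ is a Coxeter group with $\widehat{W_\Gamma} \cong \widehat{W_\Omega}$. The first step is to invoke \Cref{EvenCliqueSubgroupsFinite}, which tells us that $W_\Omega$ must itself be a right-angled Coxeter group. This reduces the problem to distinguishing $W_\Gamma$ from other RACGs by the profinite completion.

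The second step observes that a right-angled Coxeter group is, by definition, precisely the graph product of copies of $C_2$ over its defining graph: the presentation of $W_\Gamma$ and the presentation of the graph product $G_\Gamma$ with each vertex labelled by $C_2$ coincide, since commuting relations between vertex groups replace the order-$2$ relations $(vw)^2 = 1$ on edges. Since $C_2$ is a finite (directly indecomposable) group, both $W_\Gamma$ and $W_\Omega$ fall within the scope of \Cref{GraphProductsProfiniteRigidity}.

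The third step is then just to apply \Cref{GraphProductsProfiniteRigidity} directly: the isomorphism $\widehat{W_\Gamma} \cong \widehat{W_\Omega}$ of profinite completions now forces $W_\Gamma \cong W_\Omega$ as abstract groups. There is no real obstacle here, as all the work has been done in the previous subsection; the only content is the observation that RACGs are a special case of graph products of finite groups and that \Cref{EvenCliqueSubgroupsFinite} pins down which Coxeter groups can have the same profinite completion as a RACG. In particular, no further appeal to the structure theory of Coxeter groups or to subgroup separability is required beyond what was already used to establish the two ingredients.
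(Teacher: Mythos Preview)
Your proposal is correct and follows exactly the same approach as the paper's proof: first apply \Cref{EvenCliqueSubgroupsFinite} to force $W_\Omega$ to be right-angled, then apply \Cref{GraphProductsProfiniteRigidity} to conclude $W_\Gamma\cong W_\Omega$. The paper's proof is simply a more terse version of what you wrote.
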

\begin{proof}
Let $W_\Gamma$ be a RACG and let $W$ be a Coxeter group such that $\widehat{W_\Gamma}\cong\widehat{W}$. By \Cref{EvenCliqueSubgroupsFinite} it follows that $W$ is a RACG and so by \Cref{GraphProductsProfiniteRigidity} we obtain $W_\Gamma\cong W$. 
\end{proof}

\subsection{Coxeter groups and profinite free products}\label{sec:Rigidity:free}

\begin{prop}
\label{coxeter.free.products}
Let $W_\Gamma$ be a Coxeter group. Then $\widehat{W_\Gamma}$ splits as a non-trivial profinite free product if and only if $\Gamma$ is disconnected.
\end{prop}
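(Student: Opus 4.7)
The proof naturally splits into two directions.  For the ``if'' direction, the plan is simple: if $\Gamma$ is disconnected, write $V(\Gamma)=V_1\sqcup V_2$ with no edges between the parts, so that $W_\Gamma\cong W_{V_1}\ast W_{V_2}$.  Both $W_{V_i}$ are special parabolic subgroups of $W_\Gamma$ and hence retracts; applying \Cref{AmalgamProfCompletion} with trivial amalgamated subgroup yields $\widehat{W_\Gamma}\cong\widehat{W_{V_1}}\coprod\widehat{W_{V_2}}$, a nontrivial profinite free product (each factor contains an involution).

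For the converse I would argue the contrapositive: assume $\Gamma$ is connected and suppose for contradiction that $\widehat{W_\Gamma}\cong\mathbf{A}\coprod\mathbf{B}$ is a nontrivial profinite free product.  For each $v\in V(\Gamma)$, the subgroup $\langle v\rangle$ is finite of order $2$, so by \Cref{AmalgamFiniteSubgroups} it is contained in some conjugate $g\mathbf{X}_v g^{-1}$ of a factor $\mathbf{X}_v\in\{\mathbf{A},\mathbf{B}\}$.

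The key step is to show that this conjugate is uniquely determined by $v$ and that, for every edge $\{v,w\}\in E(\Gamma)$, the generators $v$ and $w$ lie in the \emph{same} such conjugate.  For this I would pass to the standard profinite Bass--Serre tree $T$ of the decomposition $\mathbf{A}\coprod\mathbf{B}$ (cf.\ \cite{RibesBook}): since the amalgamated subgroup is trivial, all edge stabilizers of $T$ are trivial.  Hence if a nontrivial element of $\widehat{W_\Gamma}$ fixed two distinct vertices of $T$, it would fix the unique geodesic between them and thus an edge, a contradiction.  In particular $v$ stabilizes a \emph{unique} vertex of $T$, and the finite subgroup $\langle v,w\rangle$ attached to an edge $\{v,w\}$ of $\Gamma$ must fix a single vertex, which therefore equals the unique vertices fixed by $v$ and by $w$ individually.

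Since $\Gamma$ is connected, induction on a spanning tree shows that there is a single conjugate $g\mathbf{A}g^{-1}$ (say) containing every generator $v\in V$.  Hence $W_\Gamma\leq g\mathbf{A}g^{-1}$; since $g\mathbf{A}g^{-1}$ is closed in $\widehat{W_\Gamma}$ we obtain $\widehat{W_\Gamma}=\overline{W_\Gamma}\leq g\mathbf{A}g^{-1}$, forcing $\mathbf{B}=1$ and contradicting nontriviality of the splitting.  The main technical hurdle is invoking the correct profinite Bass--Serre machinery---specifically that in $\mathbf{A}\coprod\mathbf{B}$ the Bass--Serre tree has trivial edge stabilizers and that a nontrivial finite subgroup fixes a unique vertex---after which the connectivity argument is essentially combinatorial.
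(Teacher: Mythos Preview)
Your proposal is correct and follows essentially the same approach as the paper: both directions are handled identically, using \Cref{AmalgamProfCompletion} for the easy direction and the action on the profinite Bass--Serre tree (with trivial edge stabilisers) for the converse, propagating a common fixed vertex along edges of the connected graph $\Gamma$. Your write-up is in fact slightly more careful than the paper's, which asserts that $\langle v\rangle$, $\langle w\rangle$, and $\langle v,w\rangle$ ``fix the same vertex'' without spelling out the uniqueness argument you give; the paper also cites \cite{ZalesskiiMelnikov1989} rather than \cite{RibesBook} for the tree.
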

\begin{proof}
Compare to the proof of \cite[Theorem 11]{KrophollerWilkes2016}. 
If $\Gamma$ is disconnected, then $W_\Gamma\cong \bigast_{i=1}^n W_{\Gamma_i}$ where $\Gamma_1,\ldots, \Gamma_n$ are the connected components of $\Gamma$. Thus, $\widehat{W_\Gamma}$ is the free profinite product of $\widehat{W_{\Gamma_1}},\ldots,\widehat{W_{\Gamma_n}}$ by \Cref{AmalgamProfCompletion}.

Assume now that $\Gamma$ is connected and $\widehat{W_\Gamma}$ splits as a profinite free product $G\coprod H$. This splitting induces an action of $\widehat{W_\Gamma}$ on the profinite tree $T$ 
associated to this splitting, in particular the edge stabilisers are trivial and the vertex stabilisers are conjugates of $G$ or $H$, see \cite[Proposition 3.8]{MelnikovZalesskii1989}. Let $\left\{v,w\right\}\in E(\Gamma)$ and recall that $\langle v,w\rangle$ is a finite (dihedral) group. Then, the subgroups $\langle v\rangle $, $\langle w\rangle $ and $\langle v,w\rangle$ fix the same vertex of $T$. Since $\Gamma$ is connected, it follows that $W_\Gamma$ fixes a vertex of $T$. Hence, $\widehat{W_\Gamma}$ fixes a vertex of $T$ and therefore at least one of $G$ and $H$ is trivial. 
\end{proof}

\begin{thm}\label{coxeter.free.rigid}
    Let $W_\Gamma$ be a Coxeter group and suppose $W_\Gamma\cong \bigast_{i=1}^n W_{\Gamma_i}$ where $\Gamma_1,\ldots,\Gamma_n$ are the connected components of $\Gamma$.  If each $W_{\Gamma_i}$ is profinitely rigid amongst Coxeter groups, then so is $W_\Gamma$. 
\end{thm}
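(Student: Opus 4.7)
The plan is to reduce to the profinite rigidity of each factor $W_{\Gamma_i}$ by matching up the free-product decomposition of $W_\Gamma$ with a corresponding one for any Coxeter group $W$ with $\widehat W \cong \widehat{W_\Gamma}$. The case $n=1$ is immediate, so I assume $n\geq 2$. Since $\Gamma$ is disconnected, \Cref{coxeter.free.products} provides a non-trivial profinite free product decomposition of $\widehat{W_\Gamma}$, which is then shared by $\widehat W$. Applying \Cref{coxeter.free.products} in the other direction to $W$, its defining Coxeter graph $\Delta$ must be disconnected, so $W \cong \bigast_{j=1}^m W_{\Delta_j}$ where $\Delta_1,\ldots,\Delta_m$ are the connected components of $\Delta$. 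As each free factor is a retract of the ambient free product, repeated use of \Cref{AmalgamProfCompletion} (with trivial amalgamating subgroup) yields the free profinite product decompositions $\widehat{W_\Gamma} \cong \coprod_{i=1}^n \widehat{W_{\Gamma_i}}$ and $\widehat W \cong \coprod_{j=1}^m \widehat{W_{\Delta_j}}$; the task becomes matching these two decompositions inside the identified group $\widehat{W} \cong \widehat{W_\Gamma}$.

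The crucial step is profinite Bass--Serre theory, and it is here that I expect the main obstacle to lie. Because each $\Gamma_i$ and $\Delta_j$ is connected, \Cref{coxeter.free.products} (read in the contrapositive) implies that each of $\widehat{W_{\Gamma_i}}$ and $\widehat{W_{\Delta_j}}$ is freely profinitely indecomposable. Each $\widehat{W_{\Delta_j}}$ is topologically finitely generated, inherited from finite generation of $W_{\Delta_j}$, and so acts on the standard profinite Bass--Serre tree $T$ associated with the decomposition $\coprod_i \widehat{W_{\Gamma_i}}$, whose edge stabilisers are trivial and whose vertex stabilisers are conjugates of the $\widehat{W_{\Gamma_i}}$. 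The Kurosh subgroup theorem for free profinite products, as in the Ribes--Zalesskii framework, then forces the freely indecomposable closed subgroup $\widehat{W_{\Delta_j}}$ to fix a vertex of $T$, giving an assignment $\sigma\colon\{1,\ldots,m\}\to\{1,\ldots,n\}$ and elements $g_j \in \widehat W$ such that $\widehat{W_{\Delta_j}} \leq g_j \widehat{W_{\Gamma_{\sigma(j)}}} g_j^{-1}$. The care required here is to verify that the standard tree is a profinite tree in the appropriate sense so that the cited theorem applies to a closed, torsion-containing, topologically finitely generated subgroup.

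To promote these inclusions into isomorphisms I would use the canonical retractions $\rho_i\colon\widehat{W_\Gamma}\twoheadrightarrow\widehat{W_{\Gamma_i}}$. Each $\rho_i$ kills every $\widehat{W_{\Delta_k}}$ with $\sigma(k)\neq i$, because any conjugate of $\widehat{W_{\Gamma_{\sigma(k)}}}$ is sent to the trivial subgroup, and $\rho_i$ restricts to an isomorphism on every conjugate $g\widehat{W_{\Gamma_i}}g^{-1}$. Since the $\widehat{W_{\Delta_k}}$ topologically generate $\widehat W$ and $\rho_i$ is surjective, $\sigma^{-1}(i)$ must be non-empty; running the symmetric argument from the other decomposition then forces $\sigma$ to be a bijection with $m=n$. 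For the unique $j=\sigma^{-1}(i)$, the image $\rho_i(\widehat{W_{\Delta_j}})$ is a closed subgroup which topologically generates $\widehat{W_{\Gamma_i}}$, hence equals it; combined with the inclusion $\widehat{W_{\Delta_j}} \leq g_j \widehat{W_{\Gamma_i}} g_j^{-1}$ and the injectivity of $\rho_i$ on the latter, this forces $\widehat{W_{\Delta_j}} = g_j \widehat{W_{\Gamma_i}} g_j^{-1} \cong \widehat{W_{\Gamma_i}}$. Profinite rigidity of each $W_{\Gamma_i}$ amongst Coxeter groups now gives $W_{\Delta_j} \cong W_{\Gamma_i}$, and reassembling the factors yields $W \cong \bigast_{i=1}^n W_{\Gamma_i} \cong W_\Gamma$.
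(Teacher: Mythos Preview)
Your argument is correct and shares the same skeleton as the paper's: decompose both $\widehat{W_\Gamma}$ and $\widehat W$ as free profinite products of the completions of their connected-component factors, use the action on the standard profinite tree to show each factor on one side is conjugate into a factor on the other, deduce a bijection between the factors with isomorphic profinite completions, and then invoke rigidity of each $W_{\Gamma_i}$.  The two proofs differ in how they execute the two key steps.

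For the ellipticity step, the paper does not invoke a profinite Kurosh theorem at all.  It instead reuses the concrete argument from the proof of \Cref{coxeter.free.products}: each Coxeter generator of $W_{\Gamma_i}$ has order~$2$, adjacent generators span a finite group, finite subgroups acting on a profinite tree with trivial edge stabilisers fix a vertex, and connectivity of $\Gamma_i$ forces all of $W_{\Gamma_i}$ (hence its closure) to fix a common vertex.  This completely sidesteps the verification you flag about applying a Kurosh-type result to closed subgroups; your route works but brings in heavier machinery than needed.

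For the matching step, the paper goes the other way and outsources: having shown the mutual containments-up-to-conjugacy, it simply cites \cite[Proposition~4.5]{deBessaPortoZalesskii2022} to conclude $n=m$ and $\widehat{W_{\Gamma_k}}\cong\widehat{W_{\Delta_{\pi(k)}}}$.  Your retraction argument is a nice self-contained alternative that makes the bijection and the promotion of inclusions to equalities completely explicit, at the cost of a few more lines.
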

\begin{proof}
Let $W_\Omega$ be a Coxeter group. We have $W_\Omega=\bigast_{j=1}^m W_{\Omega_i}$ where $\Omega_1, \ldots, \Omega_m$ are the connected components of $\Omega$. Assume that $\widehat{W_\Gamma}\cong\widehat{W_\Omega}$. By \Cref{AmalgamProfCompletion} we have $\coprod_{i=1}^n \widehat{W_{\Gamma_i}}\cong\coprod_{j=1}^m\widehat{W_{\Omega_j}}$.

Let $$\varphi\colon \coprod_{i=1}^n \widehat{W_{\Gamma_i}}\to\coprod_{j=1}^m\widehat{W_{\Omega_j}} $$
be an isomorphism. 

We consider the action of $\coprod_{j=1}^m\widehat{W_{\Omega_j}}$ on the associated profinite tree $T$. Note that the edge stabilisers are trivial and the vertex stabilizers are conjugates of $\widehat{W_{\Omega_j}}$, $j=1,\ldots, m$, see \cite[Proposition 3.8]{MelnikovZalesskii1989}. We denote this action by $\psi$. 

The proof of \Cref{coxeter.free.products} shows that  the fixed point set of $\psi\circ\varphi(\widehat{W_{\Gamma_i}})$ is non-empty. Thus $\varphi(\widehat{W_{\Gamma_i}})$ is contained in a conjugate of $\widehat{W_{\Omega_j}}$. Similarly, $\varphi^{-1}(\widehat{W_{\Omega_j}})$ is contained in a conjugate of $\widehat{W_{\Gamma_k}}$. By \cite[Proposition 4.5]{deBessaPortoZalesskii2022} it follows that $n=m$ and that there exists $\pi\in{\rm Sym}(n)$ such that $\widehat{W_{\Gamma_k}}\cong\widehat{W_{\Omega_{\pi(k)}}}$. By assumption, for $k=1,\ldots, n$, the Coxeter group $W_{\Gamma_{k}}$ is profinitely rigid amongst Coxeter groups, thus $W_{\Gamma_{k}}\cong W_{\Omega_{\pi(k)}}$. Hence \[W_\Gamma=\bigast_{i=1}^n W_{\Gamma_i}\cong \bigast_{j=1}^n W_{\Omega_j}=W_\Omega.\qedhere\]
\end{proof}

\section{A diversion: Higman-Thompson groups \texorpdfstring{$V_n$}{Vn}}\label{sec:HT}

We first give a brief description of Thompson's group $V$ and the Higman--Thompson groups $V_n$ (the interested reader can find more details in \cite[\textsection 6]{CannonFloydParry1996}, see also \cite{Higman1974}).  Let $2 = \{0, 1\}$, let $2^*$ denote the set of finite sequences of $0$s and $1$s, and $\epsilon$ denote the empty sequence.  For $\alpha, \beta \in 2^*$ we let $\alpha\beta$ denote the concatenation of $\alpha$ with $\beta$.  Write $\alpha \preceq \beta$ if $\alpha$ is a prefix of $\beta$ (i.e. there exists $\gamma \in 2^*$ such that $\beta = \alpha\gamma$), so for example, $101 \preceq 10100$.  We say two elements are \emph{incomparable} if neither is a prefix of the other.  Write $\alpha2^*$ for the set of elements of $2^*$ which have $\alpha$ as a prefix.

The set $2^*$ together with the relation $\preceq$ is the complete binary rooted tree, with $\epsilon$ being the root.  We will call a subset $\tau \subseteq 2^*$ a \emph{rooted subtree} if $\tau$ is closed under taking prefixes.  An element $\alpha$ in rooted a subtree $\tau$ is a \emph{leaf} if there does not exist $\beta \in \tau \setminus \{\alpha\}$ with $\alpha \preceq \beta$.  Let $\mathfrak{T}_2$ denote the set of all rooted subtrees $\tau$ of $2^*$ such that

\begin{enumerate}

\item $|\tau| < \infty$;

\item $\epsilon \in \tau$ (so $\tau \neq \emptyset$);

\item $\alpha 0 \in \tau$ if and only if $\alpha 1 \in \tau$.

\end{enumerate}
\noindent For example, $\{\epsilon, 0, 1, 00, 01\}$ is in $\mathfrak{T}_2$ and each of $00$, $01$, and $1$ is a leaf.  Note that an element $\tau$ of $\mathfrak{T}_2$ is completely determined by its set $\operatorname{Lea}(\tau)$ of leaves.  Supposing that $\tau_0, \tau_1 \in \mathfrak{T}_2$ have the same number of leaves, a bijection $b: \operatorname{Lea}(\tau_0) \rightarrow \operatorname{Lea}(\tau_1)$ extends to a bijection $$f_b: \bigcup_{\alpha \in \operatorname{Lea}(\tau_0)} \alpha2^* \rightarrow \bigcup_{\beta \in \operatorname{Lea}(\tau_1)} \beta2^* \quad\text{ by }\quad f_b(\alpha\gamma) = b(\alpha)\gamma.$$  Clearly, $2^* \setminus \bigcup_{\alpha \in \operatorname{Lea}(\tau)} \alpha2^*  = \tau \setminus \operatorname{Lea}(\tau)$ for each $\tau \in \mathfrak{T}_2$, and so the bijections $f_b$ and $f_b^{-1}$ are each defined on a cofinite subset of $2^*$.

If $f: X \rightarrow Y$ and $g: Z \rightarrow U$ are bijections with $X, Y, Z, U$ cofinite subsets of $2^*$, we consider $f$ and $g$ to be equivalent provided they are equal on a cofinite set in $2^*$, and write $[f]$ for the equivalence class of such a bijection $f$.  The set of all such equivalence classes is easily seen to form a group, with binary operation being given by composing appropriately restricted domains and with inverses given by $[f]^{-1} = [f^{-1}]$.  Thompson's group $V$ is the subgroup consisting of those $[f_b]$ where $b: \operatorname{Lea}(\tau_0) \rightarrow \operatorname{Lea}(\tau_1)$ is a bijection for some suitable members of $\tau_0,\tau_1\in\mathfrak{T}_2$; one easily checks that $[f_b][f_c]$ is indeed equal to $[f_d]$ for some bijection $d$.  Moreover, each element $[f_b] \in V$ has a unique canonical representative $f_b \in [f_b]$ such that the trees associated with the bijection $b$ are minimal (under $\subseteq$).  Thus, we will favour the less cumbersome notation $b$ instead of $[f_b]$ when writing elements of $V$.  \emph{We use the convention in this section that actions are on the right, so the product $b \circ c$ is understood to first apply $b$ and then to apply $c$.}

One can generalize this group by changing the parameter $2$ used in describing $V$.  More precisely, for $n \geq 2$ we write $n = \{0, 1, \ldots, n - 1\}$, take $n^*$ to be the set of finite sequences of elements in $\{0, 1, \ldots, n - 1\}$, define $\alpha \preceq \beta$, and (rooted) subtrees of $n^*$ in the analogous way.  The set $\mathfrak{T}_n$ is the set of those rooted trees $\tau$ which are finite, nonempty and such that $\alpha 0 \in \tau$ if and only if $\alpha 1 \in \tau$ if and only if $\ldots$ if and only if $\alpha(n-1) \in \tau$.  Leaves of a $\tau \in \mathfrak{T}_n$ are defined as before, and finally the group $V_n$ is defined analogously as those equivalence classes of bijections between cofinite subsets of the tree $n^*$ which are produced by a bijection between the leaves of a pair of trees in $\mathfrak{T}_n$.  Particularly $V = V_2$ under this notation and $V_n = G_{n, 1}$ under the notation of Higman \cite{Higman1974} (we shall not vary the parameter $1$ as is done in defining general Higman-Thompson groups).

We point out that when the trees associated with the domain and codomain of a canonical $b \in V_n$ are equal, say $\tau$, then $b$ is an element of the subgroup $\operatorname{Sym}(\operatorname{Lea}(\tau))$ of $V_n$.  In this situation we use the standard cyclic notation for elements of the symmetric group.  Also, if $\alpha$ and $\beta$ are incomparable we use $(\alpha \hspace{.1cm} \beta)$ to denote the transposition of $\alpha$ and $\beta$ in $\operatorname{Sym}(\tau)$ where $\tau$ is the minimal element in $\mathfrak{T}_n$ with $\alpha, \beta \in \operatorname{Lea}(\tau)$.  It is also worth pointing out that an element in $\operatorname{Sym}(\operatorname{Lea}(\tau))$ might not be the canonical representative (for example, if $b: \operatorname{Lea}(\tau) \rightarrow \operatorname{Lea}(\tau)$, $\operatorname{Lea}(\tau) \supseteq \{0i\}_{i < n}$ and $b$ fixes all elements of $\{0i\}_{i < n}$ then $b$ can be represented by a bijection on the leaves of the strictly smaller tree $\tau \setminus \{0i\}_{i < n}$.

When $n \geq 2$ is even the group $V_n$ is simple.  When $n \geq 2$ is odd the group $V_n$ has a simple subgroup of index $2$ which we will describe \cite[\textsection 5]{Higman1974} (see \cite[Theorem 5.4]{Higman1974} for simplicity in both cases).  Fixing $n \geq 3$ odd, an element of $V_n$ given by bijection $b: \operatorname{Lea}(\tau_0) \rightarrow \operatorname{Lea}(\tau_1)$ is \emph{even} (respectively \emph{odd}) if the number of pairs $\alpha_1, \alpha_2 \in \operatorname{Lea}(\tau_0)$ such that $\alpha_1$ is lexicographically below $\alpha_2$ and $(\alpha_1)b$ is lexicographically above $(\alpha_2)b$ is even (resp. odd).  When $\tau_0 = \tau_1$ this corresponds to the standard terminology for the symmetric group $\operatorname{Sym}(\operatorname{Lea}(\tau_0))$.  Using the oddness of $n$ one can show that the even elements form a subgroup, which we will denote $V_n^+$.

We adopt the convention that a transposition $(1\ 1)$ is trivial, so $(1\ 1)(2\ 3)=(2\ 3)$.

Graham Higman has pointed out that the group $V$ can be generated by four involutions \cite[page 49]{Higman1974}.  We generalise this as follows.

\begin{thm}\label{fourinvolutionsgenerate}  The group $V_n$, with $n \geq 2$, is generated by four involutions.  
\end{thm}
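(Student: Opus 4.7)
The plan is to adapt the spirit of Higman's argument for $V = V_2$ to the general case $V_n$ by constructing four explicit involutions and verifying they generate $V_n$. Writing elements of $V_n$ as bijections $b\colon \operatorname{Lea}(\tau_0) \rightarrow \operatorname{Lea}(\tau_1)$ on leaves of trees in $\mathfrak{T}_n$, I would choose the four involutions to fall into two complementary types: (i) involutions that act as permutations on the leaves of a small fixed tree $\tau$ (supplying the ``symmetric'' part of $V_n$), and (ii) involutions that swap subtrees lying at different depths of $n^*$ (supplying the ``tree-manipulation'' or shift-like part). A natural attempt is to fix a small tree such as $\tau = \{\epsilon, 0, 1, \ldots, n-1\}$ (or a slight expansion of it, expanding leaf $0$ to the next level when $n$ is small), pick two involutions $a, b$ that generate a copy of $\operatorname{Sym}(n)$ on $\operatorname{Lea}(\tau)$ together with some subtree-swapping, and pick two involutions $c, d$ that swap leaves across different levels so that their products with $a, b$ give infinite-order elements realising the needed ``expansion'' moves.

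After committing to a specific quadruple $a, b, c, d$, I would carry out the verification in three steps. First, I would show that the subgroup $H := \langle a, b, c, d \rangle$ contains, via suitable conjugations, all transpositions of adjacent leaves in $\operatorname{Lea}(\tau')$ for every $\tau' \in \mathfrak{T}_n$, hence contains $\operatorname{Sym}(\operatorname{Lea}(\tau'))$. Second, I would show $H$ contains an element that realises the elementary expansion move (replacing a single leaf $\alpha$ by its children $\alpha 0, \alpha 1, \ldots, \alpha(n-1)$ in a controlled way). Third, I would conclude by invoking Higman's presentation of $V_n$ \cite{Higman1974} (which gives a generating set in terms of permutations on top-level leaves together with a small number of tree-expansion elements) to deduce $H = V_n$.

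The main obstacle is step one. Since $\operatorname{Sym}(n)$ itself already requires $n - 1$ involutions to generate as an abstract Coxeter-type presentation, four involutions lying entirely in a single finite symmetric subgroup cannot suffice once $n \geq 6$. Therefore the involutions must genuinely mix the symmetric and tree-structural components of $V_n$: conjugation by a tree-swapping involution must translate a short permutation into new permutations acting on different leaves, producing cumulatively all transpositions one needs. Arranging this matching carefully—so that every required transposition is hit and no generator is missed—is precisely the combinatorial core of the proof, and is the point at which Higman's construction for $V_2$ must be genuinely extended rather than merely copied.

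A secondary, smaller obstacle is handling the parity distinction between $n$ even (where $V_n$ is simple) and $n$ odd (where $V_n$ contains the index-two simple subgroup $V_n^+$). For odd $n$, at least one of the four involutions must be odd in the sense defined above, so that $H$ is not contained in $V_n^+$. This can be arranged by selecting $a$ (say) to be a single transposition of two adjacent leaves, which is odd.
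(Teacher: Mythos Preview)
Your overall architecture---split the four involutions into a ``symmetric'' part and a ``tree-manipulation'' part, then bootstrap to all transpositions---matches the paper's. But your diagnosis of the main obstacle rests on a false premise, and because of that you are proposing to solve a harder problem than the one actually present.

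You write that ``$\operatorname{Sym}(n)$ itself already requires $n-1$ involutions to generate as an abstract Coxeter-type presentation, [so] four involutions lying entirely in a single finite symmetric subgroup cannot suffice once $n \geq 6$.'' This is not true: the Coxeter generating set of $n-1$ adjacent transpositions is far from minimal among involution generating sets. In fact $\operatorname{Sym}(m)$ is generated by \emph{three} involutions for every $m \geq 3$: take two reflections $r_1, r_2$ in the dihedral group $D_m$ whose product $r_1 r_2$ is an $m$-cycle, and add a single transposition $t$; since an $m$-cycle and a transposition generate $\operatorname{Sym}(m)$, the three involutions $r_1, r_2, t$ do as well. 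So there is no obstruction to placing three of your four involutions inside one finite symmetric subgroup.

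This is exactly what the paper does. It works on the depth-two tree $\overline{\tau}_2$ with $n^2$ leaves, chooses $b_1, b_2$ to be two dihedral reflections on an $n^2$-gon so that $b_1 b_2$ is an $n^2$-cycle, takes $b_3$ to be the transposition $(00\;01)$, and thus obtains $\langle b_1, b_2, b_3\rangle = \operatorname{Sym}(\operatorname{Lea}(\overline{\tau}_2))$. The fourth involution $b_4$ is a single subtree swap $00 \leftrightarrow 1$ on the smaller tree $\underline{\tau}$ with leaves $\{0i\}_{i<n} \cup \{j\}_{0<j<n}$. The remainder of the proof is an induction showing $\operatorname{Sym}(\operatorname{Lea}(\overline{\tau}_k)) \leq H$ for all $k$, followed by a direct argument that arbitrary transpositions (between leaves at possibly different depths) lie in $H$; no appeal to a Higman presentation is needed. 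The parity issue for odd $n$ is handled automatically because $b_3$ is a single transposition, hence odd.

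So the concrete gap in your plan is the mistaken belief that the finite symmetric group forces you to entangle the symmetric and tree-structural generators. Once you drop that, the construction becomes much cleaner: three involutions for one finite symmetric group, one involution for a depth-changing swap, and an inductive conjugation argument to reach all transpositions.
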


\begin{proof}

As Higman has already considered the case $n = 2$, we fix $n \geq 3$.  The group $V_n$ has a non-trivial involution, for example $(0\hspace{.1cm} 1)$.  If $n$ is even, the group $V_n$ is simple and the set of non-trivial involutions in $V_n$ is nonempty and closed under conjugation so we see that $V_n$ is generated by its set of involutions.  If $n$ is odd, the involution described above is in $V_n \setminus V_n^+$ (it is a transposition) and it is easy to see that there are involutions in $V_n^+$ (say, a product of two disjoint transpositions of the leaves of a tree).  As $V_n^+$ is simple we have $V_n^+$ is generated by the set of involutions in $V_n^+$, and so $V_n$ is generated by its set of involutions since $V_n^+$ has index $2$ in $V_n$.  In either case, $V_n$ is generated by the set of involutions.

Next, if $b: \operatorname{Lea}(\tau_0) \rightarrow \operatorname{Lea}(\tau_1)$ is a canonical representative of an involution in $V_n$ then of course $b^{-1}: \operatorname{Lea}(\tau_1) \rightarrow \operatorname{Lea}(\tau_0)$ is the canonical representative of the inverse, so $b = b^{-1}$ and $\tau_0 = \tau_1$.  Therefore $b$ is an involution in $\operatorname{Sym}(\operatorname{Lea}(\tau_0))$ and is a product of commuting transpositions.  Thus $V_n$ is generated by the set of all transpositions of subsets of the form $\operatorname{Lea}(\tau)$, with $\tau \in \mathfrak{T}_n$ (a presentation for $V$ using such generators is given in \cite[Theorem 1.1]{BleakQuick2017}).

Let $\overline{\tau}_2 \in \mathfrak{T}_n$ be the rooted subtree consisting of those sequences of length at most $2$, so $\operatorname{Lea}(\overline{\tau}_2)$ is the set of those sequences in $n^*$ of length $2$.  The dihedral group of order $2n^2$ is generated by two involutions and includes a cycle of length $n^2$.  A finite symmetric group can be generated by a maximal length cycle together with a transposition.  Therefore, the group $\operatorname{Sym}(\operatorname{Lea}(\overline{\tau}_2))$ is generated by three involutions.  Take $H$ to be the subgroup of $V_n$ which is generated by these three involutions together with the transposition $(1\hspace{.1cm} 00)$.  It will suffice to prove that for incomparable $\alpha$ and $\beta$ the transposition $(\alpha \hspace{.1cm}\beta)$ is in $H$.

If incomparable $\alpha$ and $\beta$ have length $2$, then of course $$(\alpha\hspace{.1cm} \beta) \in \operatorname{Sym}(\operatorname{Lea}(\overline{\tau}_2)) \leq H.$$  Similarly if both lengths $|\alpha|$, $|\beta|$ equal $1$ we have $(\alpha\hspace{.1cm} \beta) = \prod_{i < n}(\alpha i\hspace{.1cm} \beta i) \in \operatorname{Sym}(\operatorname{Lea}(\overline{\tau}_2)) \leq H$.  Thus $\operatorname{Sym}(\operatorname{Lea}(\overline{\tau}_1)) \leq H$.  If $|\alpha| = 1$ and $|\beta| = 2$ then let $\sigma_1 = (\alpha \hspace{.1cm} 1)$ and take $\sigma_2 \in \operatorname{Sym}(\operatorname{Lea}(\overline{\tau}_2))$ to move $(\beta)\sigma_1$ to $00$ and to fix $1i$ for each $i < n$.  Then $(\alpha\hspace{.1cm}\beta) = \sigma_1\sigma_2(1 \hspace{.1cm} 00)\sigma_2^{-1}\sigma_1^{-1} \in H$.

Now, suppose for induction that for incomparable $\alpha, \beta \in n^*$ with $|\alpha|, |\beta| \leq k$ the transposition $(\alpha\hspace{.1cm}\beta)$ is in $H$, where $k \geq 2$.  Let $\alpha, \beta \in n^*$ be incomparable with $|\alpha| \leq |\beta| = k + 1$.  Take $\beta' \preceq \beta$ with $|\beta'| = 2$, and as $n \geq 3$, let $\gamma \in n^*$ have length $1$ and be incomparable to both $\alpha$ and $\beta$.  If $\sigma_1 = (\beta'\hspace{.1cm}\gamma)$ has $|(\alpha)\sigma_1| \leq k$ then $(\alpha\hspace{.1cm}\beta) = \sigma_1((\alpha)\sigma_1 \hspace{.1cm} (\beta)\sigma_1)\sigma_1^{-1} \in H$.  Otherwise we know $|\alpha| = k + 1$ and $\alpha$ is incomparable to $\beta'$. In this case we let $\beta'' \preceq \beta'$ with $|\beta''| = 1$ and $\alpha' \preceq \alpha$ with $|\alpha'| = 2$.

If $\beta'' \preceq \alpha$ then take $\delta \in n^*$ with $|\delta| = 1$ and such that the elements $\beta'', \gamma, \delta$ are pairwise incomparable, let $\sigma_2 = (\alpha' \hspace{.1cm} \delta)$, and note that $$(\alpha\hspace{.1cm} \beta) = \sigma_1\sigma_2((\alpha)\sigma_2 \hspace{.1cm}(\beta)\sigma_1)\sigma_2^{-1}\sigma_1^{-1}$$ is an element of $H$.  Otherwise we let $\sigma_2 = (\alpha'\hspace{.1cm}\beta'')$ and once again the above equality holds and $(\alpha\ \beta)$ is an element of $H$.  Thus by induction, each transposition of incomparable elements of $n^*$ is in $H$, and so $H = V_n$.
\end{proof}

\section{Grothendieck pairs and flexibility}\label{sec:Flexibility}

Long and Reid introduced in \cite{Long11} the following definition.
\begin{defn}
A finitely generated residually finite group $G$ is \emph{Grothendieck rigid} if for every finitely generated proper subgroup $H\subset G$ the inclusion induced map $\widehat{H}\to\widehat{G}$ is not an isomorphism.
\end{defn}

We require the following definition.
\begin{defn}
For $i=1,\ldots, d$, let $\varphi_i\colon G_i\rightarrow Q$ be a homomorphism. The \emph{fibre product} of this family of maps is defined as
$$P_d:=\left\{(g_1,\ldots, g_d)\in G_1\times\ldots\times G_d\mid \varphi_i(g_i)=\varphi_j(g_j), i,j=1,\ldots, d\right\}.$$

\end{defn}

The following lemma gives us a criterion for the finite generation of fibre products.
\begin{lemma}\emph{\cite[Lemma 4.3]{Bridson2023}}\label{BridsonFibreProduct}
For $i=1,\ldots, d$, let $\pi_i\colon G_i\twoheadrightarrow Q$ be an epimorphism. If the groups $G_i$ are finitely generated and $Q$ is finitely presented, then the fibre product $P_d\subseteq G_1\times\ldots\times G_d$ is finitely generated.
\end{lemma}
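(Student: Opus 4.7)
The plan is to exhibit an explicit finite generating set for $P_d$ of the standard ``1-2-3 lemma'' shape, and then verify it generates by a normal-closure argument in each factor. First I would fix a finite presentation $Q = \langle X \mid R\rangle$ and finite generating sets $Y_i$ for each $G_i$. Using surjectivity of $\pi_i$, pick a lift $\tilde{x}_i \in G_i$ of each $x \in X$, and for each $y \in Y_i$ pick a word $w_y$ in $X$ with $\pi_i(y) = w_y(X)$ in $Q$. Set $T_i := \{y \cdot w_y(\tilde{X}_i)^{-1} : y \in Y_i\}$ and $R_i := \{r(\tilde{X}_i) : r \in R\}$; both are finite subsets of $N_i := \ker \pi_i$.

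My candidate generating set for $P_d$ then has two types of elements. First, the diagonal elements $\Delta_x := (\tilde{x}_1, \ldots, \tilde{x}_d) \in P_d$ for $x \in X$. Second, for each $i$, the ``slotted'' elements $(1, \ldots, s, \ldots, 1) \in P_d$ with $s$ in the $i$-th coordinate, as $s$ ranges over the finite set $T_i \cup R_i$. Let $H \leq P_d$ be the subgroup they generate. Given $(g_1, \ldots, g_d) \in P_d$ with common projection $q = w(X) \in Q$, the element $w(\Delta_X) \in H$ has $j$-th coordinate $w(\tilde{X}_j)$, so multiplying by its inverse reduces the problem to proving $(1, \ldots, n, \ldots, 1) \in H$ for every $n \in N_i$ and every $i$.

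For fixed $i$, I would study $H_i := \{n \in G_i : (1, \ldots, n, \ldots, 1) \in H\}$, a subgroup of $G_i$, and prove it contains $N_i$. The central observation is that $H_i$ is normal in $G_i$: conjugating a slotted element by $\Delta_x$ yields $(1, \ldots, \tilde{x}_i n \tilde{x}_i^{-1}, \ldots, 1)$, so $H_i$ is stable under conjugation by $\langle \tilde{X}_i\rangle$, and since each element of $T_i$ already lies in $H_i$, it is also stable under conjugation by $T_i$. Because $G_i = \langle \tilde{X}_i \cup T_i\rangle$ (each $y \in Y_i$ rewrites as $t_y \cdot w_y(\tilde{X}_i)$ with $t_y \in T_i$), this yields normality. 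Since $H_i$ contains the finite set $T_i \cup R_i$, one checks that the quotient $G_i/\langle\langle T_i\cup R_i\rangle\rangle_{G_i}$ admits $x \mapsto \tilde{x}_i$ as an inverse to the map it induces to $Q$, so $\langle\langle T_i\cup R_i\rangle\rangle_{G_i} = N_i$ and $H_i \supseteq N_i$, as required.

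The main obstacle to watch is that slotted elements $(1, \ldots, g, \ldots, 1)$ with $g \in G_i$ do not in general lie in $P_d$ — only those with $g \in N_i$ do. This means conjugation by arbitrary elements of $G_i$ cannot be carried out directly inside $P_d$, which is precisely why one needs both $T_i$ (to correct the non-lifted generators of $G_i$) and $R_i$ (to account for the relators of $Q$) as normal generators of $N_i$. The finite presentability hypothesis on $Q$ is used exactly here, since it is what keeps $R$, and hence $R_i$, finite.
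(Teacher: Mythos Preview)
The paper does not give its own proof of this lemma; it is quoted verbatim from \cite[Lemma~4.3]{Bridson2023} and used as a black box. Your argument is correct and is precisely the standard proof one finds in that reference (and, for $d=2$, in the classical 1--2--3 lemma literature): diagonal lifts of the generators of $Q$ together with slotted normal generators $T_i\cup R_i$ for each kernel, followed by the observation that the set of $n\in N_i$ whose slotted element lies in $H$ is a normal subgroup of $G_i$ containing $T_i\cup R_i$, hence equals $N_i$. There is nothing to compare here beyond noting that you have supplied exactly the argument the paper chose to outsource.
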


Before we turn to the proof of profinite flexibility of some graph products we need the following result.
\begin{thm}\emph{\cite[Theorem 4.6]{Bridson2023}}\label{BridsonFlexibility}
For $i=1,\ldots,d$, let $G_i\twoheadrightarrow Q$ be an epimorphism of finitely generated groups, and let $P_d\subseteq G_1\times\ldots\times G_d$ be the associated fibre product. If $Q$ is finitely presented, $\widehat{Q}=1$ and $H_2(Q; \Z)=0$, then the inclusion $\iota\colon P_d\hookrightarrow G_1\times\ldots\times G_d$ induces an isomorphism $\widehat{\iota}\colon\widehat{P_d}\to\widehat{G_1}\times\ldots\times \widehat{G_d}$.
\end{thm}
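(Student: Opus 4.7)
The plan is to generalise the classical Platonov--Tavgen' fibre product argument from $d=2$ to arbitrary $d \geq 2$, combining the K\"unneth formula with a suitable exactness theorem for the profinite completion functor.  Throughout I set $N_i := \ker(\pi_i)$ and $N := N_1 \times \cdots \times N_d$.

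First I would observe that $Q$ must be perfect: since $Q$ is finitely generated, its abelianisation $Q^{\mathrm{ab}}$ is a finitely generated abelian group, and $\widehat Q = 1$ forces $\widehat{Q^{\mathrm{ab}}} = 1$, hence $Q^{\mathrm{ab}} = 0$.  Together with $H_2(Q;\mathbb{Z}) = 0$, the K\"unneth formula then gives $H_2(Q^d;\mathbb{Z}) = 0$ by induction on $d$.

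Next, I would set up the commutative diagram of short exact sequences
\[
\begin{tikzcd}
1 \ar[r] & N \ar[r] \ar[d, equal] & P_d \ar[r] \ar[d, "\iota"] & Q \ar[r] \ar[d, "\Delta"] & 1 \\
1 \ar[r] & N \ar[r] & G_1 \times \cdots \times G_d \ar[r] & Q^d \ar[r] & 1
\end{tikzcd}
\]
where $\Delta$ is the diagonal embedding and the top quotient map sends $(g_1,\ldots,g_d) \in P_d$ to the common value $\pi_i(g_i)$.  By \Cref{BridsonFibreProduct}, $P_d$ is finitely generated, so both rows satisfy the hypotheses of the classical exactness theorem of Stallings--Bieri (or Ribes--Zalesskii): a short exact sequence $1 \to K \to H \to R \to 1$ with $H$ finitely generated, $R$ finitely presented and $H_2(R;\mathbb{Z})=0$ gives rise to a short exact sequence of profinite completions.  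Applying this to each row, and using $\widehat{A \times B} \cong \widehat A \times \widehat B$ to rewrite the lower middle, I obtain
\[
\begin{tikzcd}
1 \ar[r] & \widehat N \ar[r] \ar[d, equal] & \widehat{P_d} \ar[r] \ar[d, "\widehat\iota"] & \widehat Q \ar[r] \ar[d, "\widehat\Delta"] & 1 \\
1 \ar[r] & \widehat N \ar[r] & \widehat{G_1} \times \cdots \times \widehat{G_d} \ar[r] & \widehat{Q^d} \ar[r] & 1
\end{tikzcd}
\]
in which the leftmost vertical is the identity on $\widehat N$.

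Finally, $\widehat Q = 1$ gives $\widehat{Q^d} \cong (\widehat Q)^d = 1$, so the right vertical is trivially an isomorphism.  The five-lemma then forces $\widehat\iota$ to be an isomorphism, completing the argument.  The main technical obstacle is the correct invocation of the exactness theorem on both rows; in particular, verifying that the kernel ``$\widehat N$'' appearing in both profinite sequences is canonically the same profinite group (with the identity as the natural morphism between them) is where the homological hypotheses on $Q$, and their consequence $H_2(Q^d;\mathbb{Z})=0$, genuinely do the work.
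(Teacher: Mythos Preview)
The paper does not prove this statement: it is quoted from \cite[Theorem~4.6]{Bridson2023} and used as a black box. There is therefore no argument in the paper for you to compare against.

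On your proposal itself: the overall architecture is right, but the ``exactness theorem'' you invoke does not exist in the form you state. It is false that $H$ finitely generated, $R$ finitely presented and $H_2(R;\ZZ)=0$ alone force $1\to\widehat K\to\widehat H\to\widehat R\to 1$ to be exact. For a concrete failure take $H=F_2=\langle a,b\rangle$, $R=\ZZ$ via $a\mapsto 1$, $b\mapsto 0$, and $K=\ker(H\to R)$; then $K$ is free on $\{a^nba^{-n}:n\in\ZZ\}$, and $\widehat K\to\widehat H$ is not injective, since the index-$2$ subgroup of $K$ that kills $b$ but not $aba^{-1}$ is not $a$-invariant and therefore contains no $M\cap K$ with $M\trianglelefteq H$ of finite index.

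The correct input---and this is what Platonov--Tavgen' actually prove and what Bridson uses---additionally requires $\widehat R=1$, and then the conclusion is simply that $K\hookrightarrow H$ induces an isomorphism $\widehat K\to\widehat H$. With that lemma your strategy works cleanly: apply it to the top row with $R=Q$ and to the bottom row with $R=Q^d$ (your K\"unneth verification that $H_2(Q^d;\ZZ)=0$ is correct and is precisely what the bottom row needs), obtaining $\widehat{P_d}\cong\widehat N\cong\widehat{G_1}\times\cdots\times\widehat{G_d}$ directly. The five-lemma then becomes superfluous.
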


\begin{thm}
\label{UniversalCoxeter}
Let $d\geq 2$. For $n\in\mathbb{N}$, $n\geq 2$, there exists $\ell_n\in\mathbb{N}$ such that $$\prod_{i=1}^d\left(\bigast_{i=1}^{\ell_n}C_n\right)$$
is not Grothendieck rigid and not profinitely rigid.
\end{thm}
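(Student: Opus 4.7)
The plan is to apply the Platonov--Tavgen fibre product strategy with target $Q = V := V_2$, Thompson's group. Recall that $V$ is finitely presented, infinite, simple (so $\widehat{V} = 1$), and $H_2(V;\mathbb{Z}) = 0$ by Kapoudjian, so $V$ satisfies the hypotheses of \Cref{BridsonFlexibility}. The key preliminary is that for every $n \geq 2$ the group $V$ is generated by finitely many elements of order exactly $n$. Indeed, $V$ contains such an element (an $n$-cycle permuting $n$ chosen leaves of any tree $\tau \in \mathfrak{T}_2$ with at least $n$ leaves, fixing the rest), and the subgroup of $V$ generated by all order-$n$ elements is conjugation-invariant, hence normal, and non-trivial, so equals $V$ by simplicity. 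Finite generation of $V$ then furnishes a finite generating set $\{v_1, \ldots, v_{\ell_n}\}$ of order-$n$ elements. Setting $G := \bigast_{i=1}^{\ell_n} C_n$, define a surjection $\pi \colon G \twoheadrightarrow V$ by sending the generator of the $i$-th factor to $v_i$; the restriction of $\pi$ to each $C_n$ factor is an isomorphism onto $\langle v_i \rangle$ because $v_i$ has order exactly $n$.

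Next, I would form the $d$-fold fibre product $P_d = \{(g_1, \ldots, g_d) \in G^d : \pi(g_1) = \cdots = \pi(g_d)\}$. By \Cref{BridsonFibreProduct} the group $P_d$ is finitely generated, and by \Cref{BridsonFlexibility} the inclusion $\iota \colon P_d \hookrightarrow G^d$ induces an isomorphism $\widehat{P_d} \cong \widehat{G^d}$. Since $V \neq 1$, $P_d$ is a proper subgroup of $G^d$, so $G^d$ is not Grothendieck rigid.

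For profinite flexibility it remains to show $P_d \not\cong G^d$, which I would verify via the maximum order of a finite subgroup. The group $G^d$ contains $C_n \times \cdots \times C_n$ (one $C_n$ factor from each copy of $G$) of order $n^d$. On the other hand, let $F \leq P_d$ be any finite subgroup and write $H_i := \pi_i(F) \leq G$ for its coordinate projections. By the Kurosh subgroup theorem each $H_i$ is cyclic and conjugate in $G$ to a subgroup of some $C_n$ factor, so $\pi|_{H_i}$ is injective. The constraint $\pi(h_1) = \cdots = \pi(h_d)$ satisfied by every tuple $(h_1, \ldots, h_d) \in F$ forces each $h_i$ to be determined by $h_1$, and hence $|F| \leq |H_1| \leq n$. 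Since $n^d > n$ whenever $d \geq 2$ and $n \geq 2$, $P_d$ has no finite subgroup of order $n^d$, and in particular $P_d \not\cong G^d$. As $P_d$ is finitely generated, residually finite (being a subgroup of the residually finite group $G^d$), and satisfies $\widehat{P_d} \cong \widehat{G^d}$, this shows $G^d$ is not profinitely rigid. The main delicate step is arranging $\pi$ so that the $v_i$ have order \emph{exactly} $n$, which drives the finite-subgroup invariant distinguishing $P_d$ from $G^d$; without injectivity of $\pi$ on torsion, the distinguishing computation collapses.
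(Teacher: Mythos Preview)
Your proof is correct and follows the same overall strategy as the paper: surject $\bigast_{i=1}^{\ell_n} C_n$ onto Thompson's group $V$ using a finite generating set of order-$n$ elements (obtained via simplicity of $V$ and the fact that it contains elements of every finite order), form the $d$-fold fibre product, and apply \Cref{BridsonFibreProduct} and \Cref{BridsonFlexibility}.

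The one genuine difference is in how you distinguish $P_d$ from $G^d$. The paper invokes Baumslag's theorem that $N\coloneqq\ker\pi$ is not finitely generated, and then argues via centralizers: in $G^d$ the centralizer of a non-cyclic subgroup is (up to reordering coordinates) $G^{d-1}\times\{1\}$, whereas in $P_d$ any such centralizer lies in $N^{d-1}\times\{1\}$ and hence fails to be finitely generated. Your argument via the maximal order of a finite subgroup is more elementary---it avoids Baumslag entirely and instead exploits directly your careful choice that each $v_i$ has order \emph{exactly} $n$, making $\pi$ injective on every finite subgroup of $G$; the fibre-product constraint then forces $|F|\leq n < n^d$. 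Both arguments are valid. Yours is shorter and makes the role of the exact-order hypothesis transparent; the paper's centralizer technique, on the other hand, is the one that generalises to the infinite-genus results that follow (\Cref{infinitegenus}, \Cref{nicefreeprod}), where the free factors need not be finite and a torsion invariant is unavailable.
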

\begin{proof}
Let $V$ denote Richard Thompson's simple group $V$.  By \cite{Higman1974}, $V$ is finitely presented and by \cite[Theorem 0.1]{Kapoudjian2002} $H_2(V;\ZZ)=0$ (in fact $V$ is acyclic \cite{SzymikWahl2019}).  Let $X_n=\{g\in V\mid \mathrm{ord}(g)=n \}$.   Now, $\langle X_n\rangle\trianglelefteq V$ but every finite group embeds into $V$, see \cite[page 241]{CannonFloydParry1996}. So $\langle X_n\rangle$ is a non-trivial normal subgroup of a simple group.  Hence, $\langle X_n\rangle = V$. Thus, there is a finite set $Y_n\subseteq X_n$ of cardinality $\ell_n$ such that $Y_n$ generates $V$.

Let $H_n=\bigast_{i=1}^{\ell_n} C_n$ and $H_n^d=\prod_{i=1}^d H_n$.  We have a surjection $\pi_n\colon H_n\onto V$ sending a generator of each factor $C_n$ to a distinct element of $Y_n$.  Since $V$ is infinite $N_n\coloneqq \ker \pi_n$ has infinite index.  Now, a classical result of Baumslag \cite[\textsection 6]{Baumslag1966} implies that $\ker\pi_n$ is not finitely generated.

For $d\geq 2$ we denote by $P_{n,d}$ the $d$-fold fibre product of the map $\pi_n$.
By \Cref{BridsonFibreProduct} we see that $P_{n,d}$ is finitely generated.  Now, \Cref{BridsonFlexibility} implies that the inclusion $P_{n,d}\hookrightarrow H_n^d$ induces an isomorphism $\widehat{P}_{n,d}\to \widehat{H}_n^d$.  

To conclude we need to show $P_{n,d}$ is not isomorphic to $H_n^d$.  Suppose for a contradiction that they are.  By the Kurosh subgroup theorem \cite{Kurosh1934} centralisers in $H_n$ are cyclic.  Now, it follows that the non-abelian subgroups of $H_n^d$ that are centralisers of non-cyclic groups are (up to relabelling factors) of the form $H_n^{d-1}\times \{1\}$.  The subgroups (up to relabelling factors) of the form $N_n^{d-1}\times \{1\}$ of $P_{n,d}$ are characterised similarly.
%It follows that centralisers of non-cyclic subgroups of $H_n$ in $H_n^d$ are (up to relabelling factors) of the form $H_n^{d-1}\times \{1\}$.  Similarly, centralisers of non-cyclic subgroups in $P_n^d$ are (up to relabelling factors) of the form $N_n^{d-1}\times \{1\}$.  
But these latter groups are not finitely generated. A contradiction.  We conclude $P_{n,d}$ is not isomorphic to $H_n^d$ and so $H_n^d$ is neither Grothendieck rigid nor profinitely rigid in the absolute sense.
\end{proof}

\begin{remark}\label{not.fp}
    The groups $P_{n,d}$ (for $n\geq2$ and $d\geq2$) are not finitely presented.  There are a number of ways to see this, here is one: $H=\prod_{i=1}^d\bigast_{j=1}^{\ell_n}C_n$ has a finite index subgroup $G$ isomorphic to a direct product of $d$ copies of a finitely generated free group $F$.  Restricting the map $\bigast_{j=1}^{\ell_n} C_n\to V$ to $F$ we obtain a fibre product $P_{n,d}'\leqslant G$ which is a finite index subgroup of $P_{n,d}$ and by the argument above a Grothendieck pair for $G$.  But every finitely presented subgroup of a direct product of free groups is separable \cite{BridsonWilton2008} and so cannot be a Grothendieck pair.  Hence, $P_{n,d}'$ is not finitely presented and so neither is $P_{n,d}$.
\end{remark}

\begin{lemma}\label{lemma_fibre_prods_non_iso}
    Let $d\geq 2$.  Let $G_1,\dots,G_\ell$ be non-trivial finitely generated residually finite groups in which the centraliser of every nonabelian subgroup is trivial. Let $H=\bigast_{j=1}^\ell G_j$ and suppose $H\not\cong D_\infty$. Suppose there exist surjections $\phi_n\colon \bigast_{j=1}^\ell G_j\to V_n$ for every $n\geq 2$.  Let $P_{n}$ denote the $d$-fold fibre product of $\phi_n$.  Then, $P_{n}\cong P_{m}$ if and only if $n=m$.
\end{lemma}
\begin{proof}
Assume that $P_n$ and $P_m$ are isomorphic and let $N_n = \ker(\phi_n)$ and $N_m = \ker(\phi_m)$.  We know that $\prod_{i = 1}^d N_n \leq P_n \leq H^d$.  For $1 \leq i \leq d$ let $p_i: H^d \rightarrow H$ denote projection to the $i$ coordinate.  If $K \leq P_n$ is nonabelian then for some $i$ we have $p_i(K) \leq H$ is nonabelian, so assume without loss of generality that $i = 1$.  The centraliser in $H$ of $p_1(K)$, denoted $C_H(p_1(K))$, is trivial and this implies that $C_{H^d}(K)$ is a subgroup of $\{1\} \times \prod_{i = 2}^d H$.  Therefore $C_{P_n}(K) = P_n \cap C_{H^d}(K)$ is a subgroup of $\{1\} \times \prod_{i = 2}^d N_n$.  What we have just shown is that if $K \leq P_n$ is nonabelian then there is some $1 \leq i_0 \leq d$ for which $C_{P_n}(K) \leq (\prod_{i \neq i_0} N_n) \times \{1\}$.

%Now, $N_n$ is a subgroup of the free product $H$, so by the Kurosh subgroup theorem \cite{Kurosh1934} if $N_n$ were abelian then it would be either infinite cyclic or conjugate to a subgroup of one of the factors of $H$.  However $N_n$ cannot be conjugate to a subgroup of one of the factors of $H$ because $H$ has at least $2$ nontrivial factors which are residually finite, this would imply that the infinite simple group $H/N_n \cong V_n$ has a proper finite index subgroup, a contradiction. If on the other hand $N_n$ is infinite cyclic then by normality its centraliser has index either $1$ or $2$ in $H$; index $1$ is impossible since the free product $H$ has trivial center and index $2$ is impossible for it would give an index $2$ subgroup of the infinite simple group $H/N_n \cong V_n$.  Thus $N_n$ is not abelian. 

Now, $N_n$ is a subgroup of the free product $H$, so by the Kurosh subgroup theorem \cite{Kurosh1934} if $N_n$ were abelian then it would be either infinite cyclic or conjugate to a subgroup of one of the factors of $H$. The subgroup $N_n$ can not be contained in a free factor, because it is normal in $H$ and can not be cyclic, because $H$ is neither cyclic and nor $D_\infty$.

We therefore have 
\[C_{P_n}\left(N_n \times \prod_{i \neq i_0}\{1\}\right) = \left(\prod_{i \neq i_0} N_n \right) \times \{1\}.\]
Thus, each subgroup $(\prod_{i_0 \neq 1} N_n) \times \{1\}$ is an element in the set $\mathcal{X} = \{C_{P_n}(K) \mid K \leq P_n \text{ nonabelian} \}$ and each element in $\mathcal{X}$ includes into such a subgroup.  Therefore the subgroup of $P_n$ generated by maximal subgroups which centralize a nonabelian subgroup is precisely $\prod_{i = 1}^d N_n$, and by the same reasoning the comparably defined subgroup in $P_m$ is $\prod_{i = 1}^d N_m$.  As $P_n$ is isomorphic to $P_m$, $\prod_{i = 1}^d N_n$ corresponds to $\prod_{i = 1}^d N_m$ under the isomorphism.  Therefore $P_n/\prod_{i = 1}^d N_n \cong V_n$ is isomorphic to $P_m/\prod_{i = 1}^d N_m \cong V_m$, and $V_n \simeq V_m$ implies that $n = m$ by \cite[Theorem 6.4]{Higman1974}.
\end{proof}

\begin{thm}
\label{infinitegenus}  
Let $G_1, \ldots, G_{\ell}$ be finitely generated, residually finite groups such that for all $1 \leq j \leq \ell$ the centraliser of a nonabelian subgroup of $G_j$ is trivial.  Assume also that at least four of the $G_j$ have a subgroup of index $2$.  Let $d \geq 2$.  The genus of $$\prod_{i=1}^d\left(\bigast_{j=1}^{\ell}G_j\right)$$ is infinite.
\end{thm}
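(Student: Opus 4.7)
The plan is to apply the Platonov--Tavgen' strategy. Write $G:=\bigast_{j=1}^\ell G_j$. For each even $n\geq 2$ I build a surjection $\pi_n\colon G\twoheadrightarrow V_n$ onto a Higman--Thompson group as follows: by hypothesis four of the vertex groups, say $G_{j_1},\ldots,G_{j_4}$, admit epimorphisms $\sigma_k\colon G_{j_k}\twoheadrightarrow C_2$, and by \Cref{fourinvolutionsgenerate} $V_n$ is generated by four involutions $v_1,\ldots,v_4$; sending $G_{j_k}$ onto $\langle v_k\rangle$ via $\sigma_k$ and the remaining $G_j$ to the identity defines the required $\pi_n$ on the free product $G$, and its image contains all four generators of $V_n$.

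Let $P_d^{(n)}$ be the $d$-fold fibre product of $\pi_n$. The group $V_n$ is finitely presented (Higman), has $H_2(V_n;\Z)=0$ (Kapoudjian), and for even $n$ is infinite simple so $\widehat{V_n}=1$. Hence \Cref{BridsonFibreProduct} gives that $P_d^{(n)}$ is finitely generated and \Cref{BridsonFlexibility} gives that the inclusion $P_d^{(n)}\hookrightarrow G^d$ induces an isomorphism $\widehat{P_d^{(n)}}\cong \widehat{G}^d$. As $P_d^{(n)}$ is a subgroup of the residually finite group $G^d$ it is itself residually finite, so every $P_d^{(n)}$ lies in the genus of $\prod_{i=1}^d G$.

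It remains to exhibit infinitely many pairwise non-isomorphic groups among the $P_d^{(n)}$. Following the centralizer argument in \Cref{UniversalCoxeter}, the hypothesis on each $G_j$ together with the Kurosh subgroup theorem applied to $G$ shows that every non-abelian subgroup of $G$ has trivial centralizer (for a non-abelian subgroup contained in a conjugate of some $G_j$ the claim follows from malnormality of the factor plus the hypothesis, and otherwise an element of $H$ not conjugate into a factor already has cyclic centralizer in $G$). Hence for a non-abelian $H\leq K_n:=\ker\pi_n$ placed in the first coordinate of $G^d$, the centralizer in $P_d^{(n)}$ of $H\times\{1\}^{d-1}$ is exactly $\{1\}\times K_n^{d-1}$, and $P_d^{(n)}/K_n^d\cong V_n$ arises as a canonical infinite simple quotient of $P_d^{(n)}$.

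The main obstacle is to show that any abstract isomorphism $P_d^{(n)}\cong P_d^{(m)}$ transports the canonical simple quotient $V_n$ to $V_m$, whence the classical rigidity that $V_n\cong V_m$ only when $n=m$ yields infinite genus. The plan is to characterise $K_n^d$ intrinsically in $P_d^{(n)}$---roughly, as the smallest normal subgroup whose quotient is an infinite simple group compatible with the normal subgroup $\{1\}\times K_n^{d-1}$ obtained from the centralizer data above---and to rule out spurious infinite simple quotients of $P_d^{(n)}$ arising from free subgroups inside $K_n^d$ by a Kurosh analysis of $K_n$ as an infinite-index normal subgroup of the free product $G$. This last step is the technical heart of the argument.
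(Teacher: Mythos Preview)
Your setup matches the paper exactly through the construction of $\pi_n$, the application of \Cref{BridsonFibreProduct} and \Cref{BridsonFlexibility}, and the centralizer computation. The gap is in the final step: you leave the intrinsic characterisation of $K_n^d$ as a plan, and the plan you sketch (find the smallest normal subgroup with infinite simple quotient, then rule out ``spurious'' simple quotients of $P_d^{(n)}$ by a Kurosh analysis) is both incomplete and harder than necessary. Controlling all infinite simple quotients of a fibre product is genuinely delicate, and nothing in your outline explains how to do it.

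The paper avoids this entirely with a pure centraliser argument. The two missing observations are: (i) for \emph{every} nonabelian $K\le P_d^{(n)}$ (not just those of the form $H\times\{1\}^{d-1}$ with $H\le K_n$), projecting to a coordinate where the image is nonabelian shows $C_{P_d^{(n)}}(K)\subseteq \{1\}_{i_0}\times\prod_{i\neq i_0}K_n$ for some $i_0$; and (ii) $K_n$ itself is nonabelian, since a normal abelian subgroup of the free product $G$ would be infinite cyclic or conjugate into a factor, and either possibility contradicts $G/K_n\cong V_n$ being infinite simple. Together these say that the \emph{maximal} elements of $\{\,C_{P_d^{(n)}}(K):K\le P_d^{(n)}\text{ nonabelian}\,\}$ are precisely the $d$ subgroups $\{1\}_{i_0}\times\prod_{i\neq i_0}K_n$, so the subgroup they generate is exactly $K_n^d$. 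This description is isomorphism-invariant, hence $P_d^{(n)}\cong P_d^{(m)}$ forces $V_n\cong P_d^{(n)}/K_n^d\cong P_d^{(m)}/K_m^d\cong V_m$ and therefore $n=m$. No analysis of simple quotients is required.
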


\begin{proof}  Fix $d \geq 2$.  Let $H = \bigast_{j=1}^{\ell}G_j$.  We have that $H$ is residually finite and finitely generated, that the centraliser of a nonabelian subgroup of $H$ is trivial (by the Kurosh subgroup theorem \cite{Kurosh1934}), and that for each even $n \geq 2$ we have a surjective homomorphism $\phi_n: H \rightarrow V_n$ (by \Cref{fourinvolutionsgenerate}).  For each even $n \geq 2$ let $P_n$ 
denote the $d$-fold fibre product of the map $\phi_n$ (since we fix $d$, we omit it from the notation here).  Further, $V_n$ is finitely presented by \cite{Higman1974} and $H_2(V_n;\Z)=0$ by \cite[Theorem 0.1]{Kapoudjian2002}. Hence, by \Cref{BridsonFlexibility} we see that $\widehat{P_n}$ is isomorphic to $\widehat{\prod_{i = 1}^dH}$ and by \Cref{BridsonFibreProduct} each $P_n$ is finitely generated.  Now, \Cref{lemma_fibre_prods_non_iso} implies that $P_n \cong P_m$ if and only if $n = m$, whence the theorem.
\end{proof}

Recall that a group is \emph{indicable} if it has a surjective homomorphism onto the integers.  A group $J$ is \emph{3/2-generated} if for every nontrivial $g \in J$ there exists a $g' \in J$ such that $\{g, g'\}$ generates $J$.

\begin{thm}\label{nicefreeprod} Let $G$ and $L$ be finitely generated, residually finite groups in which the centraliser of a nonabelian subgroup is trivial.  Assume further that $G$ is nontrivial and $L$ is indicable.  Then the product $\prod_{i=1}^d(G \bigast L)$, where $d \geq 2$, has infinite genus.
\end{thm}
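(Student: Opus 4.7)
The plan is to adapt the strategy of \Cref{infinitegenus} with $H := G \bigast L$ as the source and the Higman--Thompson groups $V_n$ (for even $n \geq 2$) as the targets.  For each such $n$ I would construct a surjection $\phi_n \colon H \twoheadrightarrow V_n$, form the $d$-fold fibre product $P_n \leq H^d$, and deduce from \Cref{BridsonFibreProduct} and \Cref{BridsonFlexibility} that each $P_n$ is finitely generated and satisfies $\widehat{P_n} \cong \widehat{H^d}$.  It will then suffice to show that the groups $P_n$ are pairwise non-isomorphic.

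For the construction of $\phi_n$: since $G$ is nontrivial and residually finite it admits a nontrivial finite quotient, and since every finite group embeds in $V_n$ (as recalled in the proof of \Cref{UniversalCoxeter}) one obtains a homomorphism $\psi_n \colon G \to V_n$ hitting a nontrivial element $v$.  By the $3/2$-generation of $V_n$ due to Donovan--Harper, there exists $v' \in V_n$ with $\langle v, v' \rangle = V_n$.  Indicability of $L$ provides a map $\chi_n \colon L \twoheadrightarrow \ZZ \to V_n$ sending a generator of $\ZZ$ to $v'$, so that the free-product map $\phi_n := \psi_n \ast \chi_n$ has image containing $\{v, v'\}$ and is therefore surjective.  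Since $V_n$ is finitely presented (Higman), has trivial profinite completion, and satisfies $H_2(V_n;\ZZ) = 0$ (Kapoudjian), the hypotheses of \Cref{BridsonFibreProduct} and \Cref{BridsonFlexibility} are all in place.

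The main technical step will be to show that $N_n := \ker \phi_n$ is nonabelian.  By the Kurosh subgroup theorem, an abelian subgroup of $H$ is either infinite cyclic or conjugate into a free factor.  If $N_n$ were infinite cyclic, then normality would force $C_H(N_n)$ to have index at most $2$ in $H$: index $1$ contradicts the triviality of the centre of a nontrivial free product, and index $2$ would descend to a proper index $2$ subgroup of the simple group $V_n$, which is impossible.  If $N_n$ is conjugate into $G$, then $H/N_n \cong (G/N_n) \ast L$, and simplicity of $V_n$ rules out nontrivial free-product decompositions, forcing $G/N_n = 1$ (since $L$ is nontrivial) and hence $L \cong V_n$; this contradicts indicability since $V_n$ has trivial abelianisation.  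Symmetrically, $N_n$ conjugate into $L$ yields $G \cong V_n$, contradicting residual finiteness of $G$.  Once $N_n$ is known to be nonabelian, centralizers of nonabelian subgroups in $H$ are trivial (by Kurosh, inherited from $G$ and $L$), so the centralizer argument of \Cref{infinitegenus} transfers verbatim: the subgroup of $P_n$ generated by centralizers of nonabelian subgroups is precisely $N_n^d$, whence $P_n / N_n^d \cong V_n$ is recovered abstractly from $P_n$.  Any isomorphism $P_n \cong P_m$ therefore induces $V_n \cong V_m$, forcing $n = m$ by \cite[Theorem 6.4]{Higman1974}.

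The main obstacle is the construction of $\phi_n$.  In \Cref{infinitegenus} the assumption of four free factors with index-two subgroups supplies four involutions generating $V_n$ via \Cref{fourinvolutionsgenerate}; here only two free factors are available, and the shortage must be compensated by combining the universal embedding of every finite group into $V_n$ (to produce a seed element from $G$) with the $3/2$-generation of $V_n$ and the indicability of $L$ (to provide a matching partner generator).  Notably, the indicability hypothesis is used twice: once to build $\chi_n$, and once to rule out $L \cong V_n$.
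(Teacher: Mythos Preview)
Your proposal is correct and follows essentially the same approach as the paper: construct $\phi_n\colon H\twoheadrightarrow V_n$ via residual finiteness of $G$, the universal embedding of finite groups into $V_n$, $3/2$-generation of $V_n$, and indicability of $L$; then invoke \Cref{BridsonFibreProduct} and \Cref{BridsonFlexibility} and distinguish the fibre products $P_n$ by recovering $V_n$ as $P_n/N_n^d$ via the centralizer argument of \Cref{infinitegenus}. The paper in fact simply says that the distinguishing step is ``the same as in the proof of \Cref{infinitegenus}, word for word'', whereas you spell it out; your treatment of the case ``$N_n$ conjugate into a free factor'' (using $(G\ast L)/N_n\cong (G/N_n)\ast L$ and simplicity of $V_n$) is a valid variant of the paper's residual-finiteness argument in that step.
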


\begin{proof} Fix $d \geq 2$.  Let $H$ denote the free product $G \bigast L$.  We know that $V_n$ is $3/2$-generated for each natural number $n \geq 2$ \cite[Theorem 1]{DonovenHarper2020} and includes every finite group.  Since $G$ is nontrivial and residually finite, and $L$ is indicable we have for each even $n \geq 2$ a surjective homomorphism $\phi_n: H \rightarrow V_n$.  Take $P_n \leq \prod_{i = 1}^d H$ to be the $d$-fold fibre product of $\phi_n$ and $N_n$ to be its kernel.  As before, the inclusion map $\iota: P_n \hookrightarrow \prod_{i = 1}^d H$ induces an isomorphism $\widehat{\iota}\colon\widehat{P_n}\to \prod_{i = 1}^{d}\widehat{H}$. Now, \Cref{lemma_fibre_prods_non_iso} implies that $P_n \cong P_m$ if and only if $n = m$, whence the theorem.
\end{proof}

For a straightforward application of Theorem \ref{nicefreeprod}, one can take $L = \mathbb{Z}$ and take $G$ to be a nontrivial finitely generated abelian group (say, a nontrivial cyclic group).

For another application one can take $L = \mathbb{Z}$ and $G$ to be a nontrivial torsion-free group with finite $C'(1/6)$ presentation.  Such a $G$ is famously residually finite since it is hyperbolic and by combining \cite[Theorem 1.2]{Wise2004} with \cite[Corollary 1.2]{Agol2013}. Furthermore, the centraliser in a torsion-free hyperbolic group of a nontrivial element is cyclic, so $G$ satisfies the hypotheses of Theorem \ref{nicefreeprod}.  Random groups in the few-relator sense are torsion-free and $C'(1/6)$.  More concretely $G$ can be the fundamental group of an orientable surface of genus at least $2$.

Our last application is a generalisation of \cite[Proposition 9.2]{BridsonGrunewald2004}.
\begin{corollary}
\label{freegroupsinfinitegenus}
    Let $d\in\mathbb{N}$, $d\geq 2$. The direct product $\prod_{i=1}^d F_n$ of free groups of rank $n$ has infinite genus if and only if $n\geq 2$.
\end{corollary}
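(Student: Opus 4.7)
The plan is to split the biconditional into its two directions and treat them independently.

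For the easy direction, suppose $n \leq 1$. If $n = 0$ then $F_0$ is trivial and so is $\prod_{i=1}^d F_0$, whose genus is clearly finite. If $n = 1$ then $\prod_{i=1}^d F_1 \cong \mathbb{Z}^d$, a finitely generated abelian group. Any finitely generated residually finite group $H$ with $\widehat{H} \cong \widehat{\mathbb{Z}^d}$ has only abelian finite quotients, so by residual finiteness $H$ itself is abelian; matching the profinite completion among finitely generated abelian groups then forces $H \cong \mathbb{Z}^d$. Hence $\mathbb{Z}^d$ is profinitely rigid and its genus is finite.

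For the main direction, suppose $n \geq 2$. The strategy is to apply \Cref{nicefreeprod} to the decomposition $F_n \cong \mathbb{Z} \bigast F_{n-1}$, taking $G = \mathbb{Z}$ and $L = F_{n-1}$. The hypotheses are all standard: both factors are finitely generated and residually finite; $G$ is nontrivial; and $L = F_{n-1}$ is indicable because $n - 1 \geq 1$ gives a surjection $F_{n-1} \twoheadrightarrow \mathbb{Z}$ via abelianization. For the centralizer condition, in $G = \mathbb{Z}$ every subgroup is abelian so the hypothesis is vacuous, while in $L = F_{n-1}$ any nonabelian subgroup $K$ contains non-commuting elements $a, b$; since centralizers of nontrivial elements in a free group are maximal cyclic, $C_{F_{n-1}}(K) \subseteq C_{F_{n-1}}(a) \cap C_{F_{n-1}}(b)$ is the intersection of two distinct maximal cyclic subgroups and hence trivial. \Cref{nicefreeprod} then yields infinite genus for $\prod_{i=1}^d F_n$ whenever $d \geq 2$.

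The main substance of the corollary sits entirely inside \Cref{nicefreeprod}; no serious obstacle remains in this deduction, as every verification is either vacuous or a classical property of free groups.
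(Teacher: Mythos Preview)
Your proof is correct and follows essentially the same approach as the paper's: both reduce to \Cref{nicefreeprod} applied to the splitting $F_n \cong \mathbb{Z} \ast F_{n-1}$. The only cosmetic difference is that the paper assigns $G = F_{n-1}$ and $L = \mathbb{Z}$ while you swap these roles, which is immaterial since the free product is symmetric and the hypotheses of \Cref{nicefreeprod} are satisfied either way.
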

\begin{proof}
If $n=1$, then it is known that $\mathbb{Z}^d$ has genus $1$. Assume that $n\geq 2$. We take $G=F_{n-1}$ and $L=\Z$. The group $G$ is a torsion free hyperbolic group, hence the centraliser of a nonabelian subgroup is trivial. Thus by \Cref{nicefreeprod} the genus of $\prod_{i=1}^d F_n$ is infinite. 
\end{proof}

The following definitions follow \cite{Radcliffe2003}. Let $\Gamma$ be a graph and let $\Omega$ be a subgraph.  We say $\Omega$ is a \emph{module} if $\Omega$ is a full subgraph and if for each edge $\left\{v,k\right\}\in E(\Gamma)$ with $v \in V(\Gamma)\backslash V(\Omega)$ and $k\in \Omega$, and for each $k'\in \Omega$, there exists an edge $\left\{v,k'\right\}\in E(\Gamma)$. 
We form the \emph{collapsed graph} $\Gamma'$ by taking the vertex set to be $V(\Gamma)\backslash V(\Omega) \cup \{\ast\}$ and the edge set to be
    \[
    \left\{\{v,w\}\in E(\Gamma)\mid v,w\in V(\Gamma)\backslash V(\Omega)\right\} \cup \left\{(v,\ast)\mid v\in V(\Gamma),\ \{v,k\}\in E(\Gamma),\ k\in \Omega \right\}.
    \]
See \Cref{fig:module} for an example.

\begin{figure}[h]
\begin{center}
\begin{tikzpicture}

    \coordinate (A) at (0,0);
    \coordinate (B) at (0,2);
    \coordinate (C) at (2,0);
    \coordinate (D) at (2,2);
    \coordinate (E) at (-2,1);
    \coordinate (F) at (-3,1);
    \coordinate (G) at (4,1);

    \coordinate (H) at (6,1);
    \coordinate (I) at (7,1);
    \coordinate[label=below: {$\ast$}] (J) at (8,1);
    \coordinate (K) at (9,1);

    \draw[color=blue] (A) -- (B);
    \draw[color=blue] (A) -- (C);
    \draw[color=blue] (B) -- (D);
    \draw[color=blue] (C) -- (D);

    \draw (1,2.5) -- (1,2.5) node[above]{$L$};
    
    \draw[color=blue] (1,0) -- (1,0) node[below]{$K$};

    \draw (7.5,2.5) -- (7.5,2.5) node[above]{$L'$}; 
    
    \draw (F) -- (E);
    \draw (E) -- (A);
    \draw (E) -- (B);
    \draw (E) -- (C);
    \draw (E) -- (D);

    \draw (A) -- (G);
    \draw (B) -- (G);
    \draw (C) -- (G);
    \draw (D) -- (G);

    \draw (H) -- (I);
    \draw (I) -- (J);
    \draw (J) -- (K);

    \fill[color=blue] (A) circle (2pt);
    \fill[color=blue] (B) circle (2pt);
    \fill[color=blue] (C) circle (2pt);
    \fill[color=blue] (D) circle (2pt);
    \fill (E) circle (2pt);
    \fill (F) circle (2pt);
    \fill (G) circle (2pt);
    \fill (H) circle (2pt);
    \fill (I) circle (2pt);
    \fill (J) circle (2pt);
    \fill (K) circle (2pt);
\end{tikzpicture}
\caption{}
\label{fig:module}
\end{center}
\end{figure}

There is an isomorphism of the graph product $G_\Gamma$ with the graph product $G_{\Gamma'}$ when the group $G_\ast$ is isomorphic to $G_\Omega$.
Suppose there exists a module $\Omega$ of $\Gamma$ and let $G_{\Gamma'}$ denote the collapsed graph.  We say $G_\Omega$ is \emph{$G_\Gamma$-stably isomorphic} to another group $P$ (not isomorphic to $G_\Omega$) if $G_\Gamma$ is isomorphic to the graph product $G_{\Gamma'}$ where the group $G_\ast$ is isomorphic to $P$.

We say a group $G$ is \emph{profinitely flexible with $P$} if $P$ is a finitely generated residually finite group such that $G\not\cong P$ but $\widehat{G}\cong\widehat{P}$.

\begin{prop}\label{prof.flex.graph.prod}
    Let $\Gamma$ be a graph and let $\Omega$ be a module.  Let $G_\Gamma$ be a graph product on $\Gamma$ and suppose $G_\Omega$ is profinitely flexible with $P$.  If $P$ is not stably $G_\Gamma$-isomorphic to $H$, then $G$ is profinitely flexible.
\end{prop}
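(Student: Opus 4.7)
The plan is to exhibit the flexibility pair of $G_\Gamma$ explicitly as the graph product $G_{\Gamma'}(P)$, by which I mean the graph product on the collapsed graph $\Gamma'$ with the vertex group at $\ast$ replaced by $P$ (and all other vertex groups kept as in $G_{\Gamma'}(G_\Omega)=G_\Gamma$). Since $P$ is finitely generated and residually finite (as it is a profinite flexibility pair for $G_\Omega$), the graph $\Gamma'$ is finite, and all other vertex groups are finitely generated and residually finite, $G_{\Gamma'}(P)$ is finitely generated, and is residually finite by Green's theorem \cite[Cor.~5.4]{Green1990}. To conclude that $G_\Gamma$ is profinitely flexible it remains to verify (i) $G_\Gamma \not\cong G_{\Gamma'}(P)$, and (ii) $\widehat{G_\Gamma} \cong \widehat{G_{\Gamma'}(P)}$.

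For (i), I would simply unpack the hypothesis: since $P$ is a pair for $G_\Omega$, we have $P \not\cong G_\Omega$, so the assumption that $P$ is not $G_\Gamma$-stably isomorphic to $G_\Omega$ says precisely that $G_\Gamma \not\cong G_{\Gamma'}(P)$, as desired. For (ii), I would split into cases according to whether $\Gamma'$ is complete. If it is, then $G_{\Gamma'}(G_\Omega) = G_\Omega \times G_{\Gamma'-\ast}$ and $G_{\Gamma'}(P) = P \times G_{\Gamma'-\ast}$; profinite completion commutes with finite direct products of finitely generated groups, and $\widehat{G_\Omega} \cong \widehat{P}$ gives the conclusion. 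If $\Gamma'$ is not complete, I apply \Cref{GraphProductAmalgam} with $J = \mathrm{lk}_{\Gamma'}(\ast)$ and $L = V(\Gamma') \setminus \mathrm{st}_{\Gamma'}(\ast)$ to obtain
\[
G_{\Gamma'}(G_\Omega) \cong (G_\Omega \times G_J) *_{G_J} G_{J \cup L}, \qquad G_{\Gamma'}(P) \cong (P \times G_J) *_{G_J} G_{J \cup L}.
\]
All the subgroups appearing are special parabolic subgroups of the respective graph products, so by the standard projection construction they are retracts; hence \Cref{AmalgamProfCompletion} applies and both profinite completions become profinite amalgamated products over $\widehat{G_J}$ of $\widehat{G_\Omega} \times \widehat{G_J}$ (resp.~$\widehat{P} \times \widehat{G_J}$) with $\widehat{G_{J \cup L}}$. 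The isomorphism $\widehat{G_\Omega} \cong \widehat{P}$ then produces the required isomorphism $\widehat{G_\Gamma} \cong \widehat{G_{\Gamma'}(P)}$.

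The main point of care is verifying compatibility of the amalgamation: we need the isomorphism $\widehat{G_\Omega \times G_J} \cong \widehat{P \times G_J}$ to restrict to the identity on the common profinite subgroup $\widehat{G_J}$, which is built into the direct-product structure since the isomorphism is of the form $\widehat{G_\Omega}\cong \widehat{P}$ on the first factor and identity on the second. Assembling these pieces gives the desired isomorphism of profinite completions and completes the proof. A secondary, but essentially routine, point is to handle the edge cases where $J$ or $L$ is empty (the former reduces to a free-product amalgamation over the trivial group, which is still covered by \Cref{AmalgamProfCompletion}); neither case introduces genuine difficulty.
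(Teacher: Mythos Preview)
Your overall strategy matches the paper's: exhibit $G_{\Gamma'}(P)$ as the flexibility pair for $G_\Gamma$, using the collapse of the module $\Omega$. The difference lies in how you establish the isomorphism $\widehat{G_\Gamma}\cong\widehat{G_{\Gamma'}(P)}$. The paper simply asserts that $\widehat{G_\Gamma}$ is modelled by the graph product taken in the category of profinite groups (replace each $G_v$ by $\widehat{G_v}$, form the profinite free product, and impose the commutation relations), whence replacing $\widehat{G_\Omega}$ by the isomorphic $\widehat{P}$ at the vertex $\ast$ immediately gives the result. You instead give a more hands-on argument: split $G_{\Gamma'}$ as an amalgam at $\ast$ via \Cref{GraphProductAmalgam}, invoke the retract property of special parabolics to apply \Cref{AmalgamProfCompletion}, and then swap $\widehat{G_\Omega}$ for $\widehat{P}$ in the resulting profinite amalgam.

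Your route is arguably more self-contained, since it relies only on lemmas already proved in the paper rather than on the general ``profinite graph product'' description, which the paper states without proof. One small caveat: as written in the paper, \Cref{GraphProductAmalgam} is stated for graph products of \emph{finite} groups, whereas you need it for arbitrary (finitely generated, residually finite) vertex groups; of course Green's lemma holds in that generality, but you should flag that you are using the more general version. With that adjustment, your argument is correct.
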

\begin{proof}
    A model for $\widehat{G}_\Gamma$ is given by taking the graph product of $G_\Gamma$ in the category of profinite groups.  Namely, each vertex group $G_v$ is replaced by $\widehat G_v$, we form the profinite free product $\bigast_{v\in V(L)} \widehat G_v$, and then quotient out by the set of relations $[g_v, g_w]$ for $g_v\in\widehat G_v$ and $g_w\in \widehat G_w$, whenever $\{v,w\}$ is an edge of $\Gamma$.

    Now, $\Omega$ is collapsible in $\Gamma$. It follows there is an isomorphism of $G_\Gamma$ with the graph product $G_{\Gamma'}$ when the group $G_\ast$ is isomorphic to $G_\Omega$.
    On the level of profinite completions this means that $\widehat G_\Gamma$ is isomorphic to $\widehat G_{\Gamma'}$.  Replacing the group $G_\ast$ with $P$ we obtain a graph product $G_{\Gamma'}$ not isomorphic to $G_\Gamma$.  But, on the level of profinite completions $\widehat P$ and $\widehat G_\Omega$ are isomorphic.  Hence, $\widehat G_\Gamma$ and $\widehat G_{\Gamma'}$ are isomorphic. So $G_\Gamma$ is profinitely flexible.
\end{proof}

\begin{remark}
    The stability hypothesis is indeed necessary.  Upon dropping the hypothesis one can easily construct counter examples by considering graph products with vertex groups isomorphic to $\Z$ or the metacyclic groups considered by Baumslag \cite{Baumslag1974}. 
\end{remark}

\begin{corollary}
    Let $\Gamma$ be a graph and let $\Omega$ be a $(4,4)$-bipartite module.  Then, $W_\Gamma$ is profinitely flexible.
\end{corollary}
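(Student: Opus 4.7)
The plan is to combine the profinite flexibility criterion \Cref{prof.flex.graph.prod} with the infinite genus result \Cref{infinitegenus}. First, I interpret the $(4,4)$-bipartite module $\Omega$ as having underlying edge-labelled graph the complete bipartite graph $K_{4,4}$ with every edge labelled $2$, so that $W_\Omega \cong (\bigast_{i=1}^{4} C_2) \times (\bigast_{i=1}^{4} C_2)$. Applying \Cref{infinitegenus} with $d=2$, $\ell=4$, and $G_j=C_2$ for each $j$---noting that $C_2$ is finitely generated residually finite, has no nonabelian subgroups (so the centraliser condition is vacuous), and contains the trivial subgroup of index $2$---yields that the genus of $W_\Omega$ is infinite. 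The proof of \Cref{infinitegenus} produces an infinite family $\{P_n\}$ of pairwise non-isomorphic finitely generated residually finite groups with $\widehat{P_n} \cong \widehat{W_\Omega}$ and $P_n \not\cong W_\Omega$, distinguished by their simple quotients $V_n$.

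By \Cref{prof.flex.graph.prod}, profinite flexibility of $W_\Gamma$ follows once some $P_n$ is shown not to be stably $W_\Gamma$-isomorphic to $W_\Omega$. I would argue that in fact no $P_n$ can be stably isomorphic, via a uniqueness statement for vertex groups in the collapsed graph product decomposition of $W_\Gamma$: if $W_\Gamma \cong G_{\Gamma'}$ with $\ast$-vertex group $P$, then $P \cong W_\Omega$. This uniqueness is plausible because $W_\Gamma$ is a right-angled Coxeter group and its Coxeter system is rigid---the set of standard involutions is characterised up to conjugation by the group, and under the module condition on $\Omega$ the involutions coming from $\Omega$ form a parabolic subgroup of $W_\Gamma$ that is naturally identified with $W_\Omega$. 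Hence any substituted $\ast$-vertex group is forced to be $W_\Omega$, contradicting $P_n \not\cong W_\Omega$.

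The main obstacle is making this uniqueness rigorous: an abstract isomorphism $G_{\Gamma'} \to W_\Gamma$ need not respect the graph product decomposition a priori, so one must show that the image of the $\ast$-vertex subgroup $P$ lies in (and indeed equals) the parabolic subgroup $W_\Omega \leq W_\Gamma$. My approach is to trace the Coxeter involution structure through the isomorphism, using that the involutions of $\Omega$ are the unique (up to conjugation) generators of a $K_{4,4}$-shaped parabolic subgroup of $W_\Gamma$; this pins down the image of $P$ as the subgroup they generate, forcing $P \cong W_\Omega$. Once this uniqueness is in place, the combination with the infinite family $\{P_n\}$ from \Cref{infinitegenus} completes the proof via \Cref{prof.flex.graph.prod}.
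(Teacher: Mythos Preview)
Your overall strategy is correct and matches the paper's: identify $W_\Omega \cong (\bigast_{i=1}^4 C_2)^2$, produce a profinitely flexible pair $P$ for it (the paper cites \Cref{UniversalCoxeter} rather than \Cref{infinitegenus}, but either works), and then invoke \Cref{prof.flex.graph.prod}. The divergence is entirely in how you rule out stable $W_\Gamma$-isomorphism.

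The paper dispatches this step in one line: the fibre product $P$ is \emph{not finitely presented}. Since vertex groups are retracts of a graph product, if $G_{\Gamma'}$ (with $\ast$-vertex group $P$) were isomorphic to the finitely presented Coxeter group $W_\Gamma$, then $P$ would itself be finitely presented --- contradiction. That $P$ fails to be finitely presented follows from the structure exhibited in the proof of \Cref{UniversalCoxeter}/\Cref{infinitegenus}: $P$ sits inside $H\times H$ with $H$ virtually free, and its intersection with each factor is the infinitely generated kernel $N$; by Baumslag--Roseblade a finitely presented subgroup of a product of two free groups is virtually a product of finitely generated free groups, which would force $N$ to contain a finite-index subgroup of $H$, impossible since $H/N\cong V_n$ is infinite.

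Your proposed route --- arguing that any $\ast$-vertex group in an abstract isomorphism $G_{\Gamma'}\cong W_\Gamma$ must land on the parabolic $W_\Omega$ via uniqueness of Coxeter generating involutions --- is much more delicate and, as you acknowledge, not rigorous as stated. An abstract isomorphism need not send the vertex subgroup $P\leq G_{\Gamma'}$ to a parabolic of $W_\Gamma$, $P$ need not be generated by involutions at all, and $\Gamma$ may contain many $K_{4,4}$-shaped special subgraphs, so the ``unique up to conjugation'' claim is not available in general. You are missing the much cheaper invariant (finite presentability) that sidesteps all of this.
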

\begin{proof}
    In \Cref{UniversalCoxeter} we showed that $W_\Omega$ is profinitely flexible with pair $P$.  Thus, by \Cref{prof.flex.graph.prod} it suffices to show that $P$ is not stably $W_\Gamma$-isomorphic to $W_\Omega$.  But this is immediate since $P$ was not finitely presented (see \Cref{not.fp}).
\end{proof}

\begin{corollary}
    Let $\Gamma$ be a graph and let $\Omega$ be a $(2,2)$-bipartite module.  Then $A_\Gamma$ is profinitely flexible.
\end{corollary}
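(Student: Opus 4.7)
The plan is to mirror the proof of the preceding corollary (the $(4,4)$-bipartite Coxeter case) and reduce the claim to \Cref{prof.flex.graph.prod}. A $(2,2)$-bipartite module is, by definition, a full subgraph $\Omega\subseteq\Gamma$ isomorphic to the complete bipartite graph $K_{2,2}$, so the associated parabolic subgroup is $A_\Omega\cong F_2\times F_2$, with the two $F_2$ factors corresponding to the two sides of the bipartition.

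Next, I would invoke \Cref{freegroupsinfinitegenus} (specialised to $n=d=2$) to conclude that $A_\Omega\cong F_2\times F_2$ has infinite genus, and is in particular profinitely flexible: there exists a finitely generated residually finite group $P$ with $\widehat{P}\cong\widehat{A_\Omega}$ and $P\not\cong A_\Omega$, constructed explicitly in the proofs of \Cref{infinitegenus} and \Cref{freegroupsinfinitegenus} as a $d$-fold fibre product arising from a surjection $F_2\twoheadrightarrow V_m$ onto a Higman--Thompson group.

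To apply \Cref{prof.flex.graph.prod}, I must check that $P$ is not stably $A_\Gamma$-isomorphic to $A_\Omega$, and here the argument mirrors the Coxeter case verbatim: every RAAG is finitely presented, and if $P$ were placed as the $\ast$-vertex group in the collapsed graph $\Gamma'$, then $P$ would be a retract of the resulting graph product (vertex groups in graph products are always retracts), forcing that graph product to be non-finitely-presented whenever $P$ is. It therefore suffices to observe that the fibre product $P$ is not finitely presented. This falls out of the Platonov--Tavgen' machinery used to build $P$: the kernel of $F_2\twoheadrightarrow V_m$ has infinite index and is nontrivial, hence is not finitely generated by the classical result of Baumslag invoked in the proof of \Cref{UniversalCoxeter}, and the standard obstruction to the converse of the Baumslag--Bridson--Miller--Short $1$--$2$--$3$ theorem then prevents $P$ from being finitely presented. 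With this in hand, \Cref{prof.flex.graph.prod} delivers the conclusion.

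The main obstacle is not conceptual but rather the verification that the specific pair $P$ produced in \Cref{freegroupsinfinitegenus} fails to be finitely presented; once this bookkeeping is recorded, the result is an immediate translation of the Coxeter corollary to the right-angled Artin setting, since both proofs hinge on the finite presentability of $A_\Gamma$ versus the non-finite-presentability of the Platonov--Tavgen' pair.
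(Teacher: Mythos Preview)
Your proposal is correct and takes essentially the same approach as the paper's two-line proof: the paper simply cites Platonov--Tavgen' for the profinite flexibility of $F_2\times F_2$ (where you instead invoke \Cref{freegroupsinfinitegenus}), and then mirrors the preceding corollary verbatim. One small remark: the non-finite-presentability of the fibre product $P\leq F_2\times F_2$ is most cleanly obtained from the Baumslag--Roseblade theorem on finitely presented subgroups of a product of free groups, rather than from a ``converse to the 1--2--3 theorem''; the paper itself does not spell this step out either.
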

\begin{proof}
    The proof is analogous.  The key difference is profinite flexibility of $F_2\times F_2$ was shown in \cite{PlatonovTavgen1986}.
\end{proof}

\begin{corollary}\label{rem.irr}
There exist right-angled Coxeter and Artin groups that are profinitely flexible but which are neither freely nor directly reducible.
\end{corollary}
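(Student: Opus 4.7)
The plan is to reduce the corollary to a graph-theoretic existence question and then to exhibit suitable graphs explicitly. First I would translate the three required properties into conditions on the defining graph. For a graph product of groups, a non-trivial direct product decomposition of the group corresponds exactly to a non-trivial join decomposition of the graph, and (by \Cref{coxeter.free.products} for Coxeter groups, together with the well-known analogue for right-angled Artin groups) a non-trivial free product decomposition of the group corresponds to the graph being disconnected. Combined with the two preceding corollaries, it is therefore enough to produce a connected graph $\Gamma$ that is not a join and that contains either a $(4,4)$-bipartite module (yielding a RACG) or a $(2,2)$-bipartite module (yielding a RAAG).

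For the construction I would take $\Omega = K_{m,m}$ (with $m=4$ for the Coxeter case and $m=2$ for the Artin case), with bipartition $A\sqcup B$, and adjoin three external vertices $c_1,c_2,c_3$, declaring $c_1$ adjacent to every vertex of $\Omega$ and $c_2,c_3$ adjacent to no vertex of $\Omega$, while adding only the two external edges $\{c_1,c_2\}$ and $\{c_2,c_3\}$. Since each external vertex is adjacent to either all of $\Omega$ or none of it, the subgraph $\Omega$ remains a bipartite module of the required size. Connectivity of $\Gamma$ is immediate from the path $c_3-c_2-c_1$ together with the adjacencies of $c_1$ to $\Omega$.

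The main obstacle, and the only non-trivial verification, is checking that $\Gamma$ is not a join. This needs care because $\Omega$ is itself a join $A \ast B$, so careless attachments of external vertices often preserve a global join structure. I would analyse a hypothetical non-trivial join decomposition $V(\Gamma) = V_1 \sqcup V_2$: since $A$ and $B$ are independent sets, each of them lies entirely on one side. In the case $A,B \subseteq V_1$, the opposite part $V_2$ must be a non-empty subset of $\{c_1,c_2,c_3\}$; as $c_2,c_3$ are not adjacent to $A$ they cannot lie in $V_2$, forcing $V_2 = \{c_1\}$, but then $c_1$ would have to be adjacent to $c_3$, which it is not. In the case $A\subseteq V_1$, $B\subseteq V_2$, the vertex $c_2$ is adjacent to nothing in $A\cup B$ and hence cannot be placed in either part. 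Both cases yield a contradiction, so $\Gamma$ is not a join.

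Applying the preceding corollary (in its $(4,4)$-bipartite or $(2,2)$-bipartite form) then immediately yields the desired examples of right-angled Coxeter and Artin groups that are profinitely flexible, not freely reducible, and not directly reducible.
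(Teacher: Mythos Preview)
Your argument is correct and complete: the module property, connectivity, and the join analysis all go through as stated, and the two preceding corollaries then finish the job.

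Your route differs from the paper's in two respects worth noting. First, the paper uses a single graph (the one in \Cref{fig:module}, whose module is $K_{2,2}$) for both cases, obtaining the RACG by taking vertex groups $D_\infty = C_2 \ast C_2$ rather than $C_2$; expanding each $D_\infty$ vertex into two non-adjacent $C_2$ vertices turns the $K_{2,2}$ module into a $K_{4,4}$ module and recovers a genuine RACG. You instead build two separate graphs, one with a $K_{4,4}$ module and one with a $K_{2,2}$ module, and use the standard vertex groups $C_2$ and $\ZZ$ directly. Second, your external attachment (a path $c_1\text{--}c_2\text{--}c_3$ with only $c_1$ joined to $\Omega$) is slightly simpler than the paper's (two vertices joined to the module plus one pendant vertex). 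Your approach is more explicit and avoids the extra step of unpacking $D_\infty$ vertex groups, while the paper's is more economical in that a single picture serves both the Artin and Coxeter cases; neither approach has a real advantage beyond taste.
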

\begin{proof}
    We apply the previous two corollaries to the graph in \Cref{fig:module}.  In the RACG case we take the graph product with vertex groups equal to $D_\infty=C_2\bigast C_2$ and for the RAAG case we take the graph product with vertex groups equal to $\ZZ$.
\end{proof}

\bibliographystyle{S2957_halpha_v2}
\bibliography{S2957_refs_v2.bib}
\newpage

\end{document}